\theoremstyle{thmstyleone}%
\newtheorem{theorem}{Theorem}
\newtheorem{corollary}{Corollary}
\theoremstyle{thmstyletwo}%
\newtheorem{lemma}{Lemma}%
\newtheorem{remark}{Remark}
\theoremstyle{thmstylethree}%
\newcommand{\ind}{\mathbf{1}}
\renewcommand{\tilde}{\widetilde}
\renewcommand{\hat}{\widehat}
\renewcommand{\check}{\widecheck}
\newcommand{\PP}{\mathbb{P}}
\newcommand{\Z}{\mathbb{Z}}
\newcommand{\E}{\mathbb{E}}
\newcommand{\1}{\mathbf{1}}
\newcommand{\R}{\mathbb{R}}
\newcommand{\N}{\mathbb{N}}
\newcommand{\V}{\mathbb{V}}
\newcommand{\eps}{\varepsilon}
\newcommand{\nrm}[1]{\lVert #1 \rVert}
\begin{document}

\title[Nonparametric density estimation for the small jumps of Lévy processes]{Nonparametric density estimation for the small jumps of Lévy processes}


%
%
%
\author{Céline Duval  \footnote{Sorbonne Université, LPSM, UMR 8001, F-75005 Paris, France.},
Taher Jalal \footnote{UMR 8100 - Laboratoire de Math\'ematiques de Versailles, UVSQ, CNRS,  Universit\'e Paris-Saclay, 78035 Versailles Cedex, France.}\  and \
Ester Mariucci \footnote{UMR 8100 - Laboratoire de Math\'ematiques de Versailles, UVSQ, CNRS, Universit\'e Paris-Saclay, 78035 Versailles Cedex, France.}}

\date{}


\abstract{We consider the problem of estimating the density of the process associated with the small
jumps of a pure jump Lévy process, possibly of infinite variation, from discrete observations of one trajectory. The interest of such a question lies on the observation that even when the Lévy measure is known, the density of the increments of the small jumps of the process cannot be computed in closed-form. We discuss results both
from low and high frequency observations. In a low
frequency setting, assuming the Lévy density associated with the jumps larger than $\eps\in (0,1]$ in absolute value is known, 
 a spectral  estimator relying on the convolution structure of the problem achieves a parametric
rate of convergence with respect to the integrated $L_2$ loss, up to a logarithmic factor. In a high frequency setting, we remove the assumption on the knowledge of the Lévy measure of the large jumps and show that the rate of convergence depends both on the sampling scheme and on the behaviour of the Lévy measure in a neighborhood of zero. We show that the rate we find is minimax up to a logarithmic factor.  An adaptive penalized procedure is studied to select the cutoff parameter. These results are extended to encompass the case where a Brownian component is present in the Lévy process. Furthermore, we illustrate numerically the performances of our procedures.
}

\maketitle
\noindent {\sc {\bf Keywords.}} {{Deconvolution, Lévy processes, Small jumps, Infinitely divisible distributions, Minimax rates of convergence}


\noindent {\sc {\bf MSC Classification}} {62G07, 60G51, 60E07, 62G20, 62C20, 62M05}

\section{Introduction}

\subsection{Motivations}

Lévy processes are a class of jump processes that are particularly well-suited for modeling situations characterized by sudden and unpredictable changes. Initially, they gained prominence in mathematical finance and actuarial science due to their ability to capture the irregular behavior of financial markets and risk assessment. Over time, their applications have expanded to a variety of fields, including medicine and neuroscience, where they are used to model phenomena such as sudden neuronal firing or abrupt changes in biological systems.
Lévy processes are known for exhibiting diverse behaviors, such as heavy-tailed distributions, which make them useful for applications in climatology, seismology, and more recently, machine learning (see e.g. \cite{barndorff2012levy, boxma2011levy, Umut, noven2015levy} for reviews and other applications). Their capacity to model extreme events and rare occurrences further enhances their practical utility. Despite the broad range of behaviors they can exhibit, Lévy processes retain a rich mathematical structure that allows for rigorous theoretical analysis. At the foundation of more general jump processes, including the broader class of Itô semi-martingales, they are essential tools in both theoretical and applied stochastic analysis.

From a probabilistic point of view, the dynamics of the trajectories of a Lévy process $X$ is well understood. The law of $X$ is uniquely determined by the so-called Lévy triplet that contains a drift term, a diffusion coefficient and a Lévy measure  (see e.g. \cite{Bertoin,sato}). For any  pure jump L\'evy process $X$, the distribution of its increments is the convolution between the law of a martingale  $X^S$ describing its small jumps (i.e. of size less than any $\eps\in(0,1]$) and that of a compound Poisson process $X^{B}$ gathering the large jumps (larger than $\eps\in(0,1]$) of the process: $X=X^{S}+X^{B}$. For most Lévy processes with infinite Lévy measures, a closed-form expression for the law of its increments remains unknown. The main difficulty lies in computing the distribution of the small jumps, which is never available in closed-form even in very well known situations. For instance when $X$ is an $\alpha$-stable Lévy process, there exist many results for controlling the law of $X$ but nothing can be said for $X^S$ which is not an $\alpha$-stable Lévy process. 

In the literature, attempts have been made to circumvent the limited knowledge about the law of these processes by proposing approximations. Notably, the Gaussian approximation has emerged as a viable approach, showing promising results for Lévy processes with infinite activity (see, for example, \cite{asmussen2001, cohen2007gaussian, tankov, gnedenko1954limit, MR}). Although efficient in some cases, this approximation does have its limitations. In particular, its applicability tends to wane for high-frequency observations, where its accuracy may falter. The validity of the Gaussian approximation in total variation for small jumps of Lévy processes has been extensively studied in  \cite{Carpentier_2021}, where lower and upper bounds are established for the total variation distance between $n$ increments of $X^S$ and the nearest Gaussian vector. For example if $X$ is a symmetric $\alpha$-stable process, the total variation distance between the law of $(X_\Delta^S)^{\otimes n}$ and the nearest Gaussian vector tends to zero only if $\frac{\sqrt n \eps^\alpha}{\Delta}\to 0$. This means that if $\frac{\sqrt n \eps^\alpha}{\Delta}$ does not tend to 0,  it is possible to construct a test that allows distinguishing between observations from the jump model and those from a Gaussian vector. Statistically speaking, the two models are no longer (asymptotically) equivalent and the Gaussian approximation may not be  meaningful in these settings (see also Figure \ref{figure:Stable_NS} below for the unfitness of such approximation in the non-symmetric stable case).
This motivates the question of directly estimating the density of the increments of small jumps. Of course, the law governing small jumps remains fundamentally obscure, even under the assumption that the Lévy measure of the process is fully known. In the absence of closed-form formulas, a minimax estimator still provides valuable information about the shape of the true density and aids in gaining a better understanding of the regions in space where small jumps are more likely to occur.

More precisely, in this paper, we focus on the nonparametric estimation of the density $g_{\Delta}$ of an increment of the small jumps $X^{S}_{\Delta}$  of a Lévy process $X$ from $n$ observations collected with a sampling rate $\Delta$. So far there are no results in the literature focusing on the estimation of $X^S$, contrary to $X$ and $X^{B}$ which have been extensively studied (see e.g. \cite{belomestny2015levy,MR3091096,duval2019adaptive,duval2021spectral}). Using the convolution structure of the Lévy process and that  $X^B$ is a compound Poisson process with intensity and jump law depending on the Lévy measure of $X$ in an explicit way, we can derive an estimation procedure for the density of $X_{\Delta}^{S}$. \\

In estimating the density $g_{\Delta}$, a fundamental role is played by the behavior of the Lévy measure in a neighborhood of the origin. We do not require that the small jumps of $X$ are $\alpha$-stable, we only need a lower bound for the Lévy density in a neighborhood of the origin  (see \eqref{Ass:p} below). 
Under this assumption, both $X_\Delta$ and $X_\Delta^S$ have $C^{\infty}$ densities with all derivatives uniformly bounded (see Lemma \ref{lemmapicard}). In \cite{DJM}, a minimax estimator for the density $f_\Delta$ of $X_\Delta$ is proposed. In the present paper, we show that the rate of convergence for estimating the density $g_\Delta$ of $X_\Delta^S$ is essentially the same as that for estimating $f_\Delta$, for any $\Delta>0$. This result was not obvious: theoretically, the problem of estimating the density of small jumps is more complex than that of estimating $f_\Delta$ from direct observations of a sample with such a density. We can indeed consider the problem of estimating $g_\Delta$ as a deconvolution problem.
This work demonstrates that the complexity of inference on the density of discrete observations of a Lévy process is entirely driven by the complexity of inference for the small jumps.

More precisely, in the low frequency setting $\Delta>0$, we deal with a deconvolution problem  where the target density is super-smooth and to solve the problem we assume that the law of the large jumps of $X$ is known, which translates into knowing the Lévy measure of $X$ on $[-\eps,\eps]^c$. Again, we stress that this is not an oversimplifying context, even when the Lévy measure is known then we do have no access to a closed formula for the density of its small jumps. If $\Delta>0$, the inverse problem is well posed and as the density $g_{\Delta}$ is very regular our estimator attains, up to a logarithmic term, a parametric rate of convergence which is optimal  for the integrated $L_{2}$ loss (see \cite{butucea2004deconvolution}).

 In the high frequency setting $\Delta\to0$, without any knowledge on the distribution of $X^{B}$, another estimator can be proposed. Its rate of convergence is in ${(\log(n)/\Delta)^{{1}/{\alpha}}}/{n}$, it depends on the behavior of the Lévy density in a neighborhood of the origin (see Assumption \eqref{Ass:p}). Contrary to the case $\Delta$ fixed where a comparison with the deconvolution  literature is possible, the high frequency regime is utterly new. Thereby studying the optimality of the dependence in $\Delta$ of the rate for the estimation of $g_\Delta$ is necessary. Theorem \ref{thmLB} is a lower bound result that addresses this question and allows us to assert that up to a logarithmic factor, the rate found is indeed minimax. 
 
 All the rates of convergence have been identified through theoretical cutoffs that minimise the risk. However, these optimal cutoffs depend on the unknown  parameters of Assumption \eqref{Ass:p}. Therefore, a penalized adaptive procedure is proposed, adapting the one of \cite{comte2010nonparametric}. Finally, we extend our results to encompass the case where a Brownian component is present in the Lévy process, which depending on the behavior of $\Delta$, may alter importantly the rates of convergence. The performances of all these estimators are studied in  an extensive simulation study. In the remaining of this Section the principal notations and definitions are displayed.

\subsection{Setting and notations\label{sec:not}}
 Consider a pure jump L\'evy process $X$  characterized  by its L\'evy triplet $(\gamma_{\nu},0,\nu)$ where  $\nu$ is a Borel measure on $\R$ such that
\begin{align*}
\nu(\{0\})=0,\quad \nu(\R)=\infty\quad \textnormal{ and }\quad  \int_{\R}(x^2\wedge 1)\nu(d x)<\infty
\end{align*}  and 
\begin{equation*}\label{eq:bnu}\gamma_{\nu}:=\begin{cases}
             \int_{|x|\leq 1} x\nu(dx)\quad &\text{if}\quad \int_{|x|\leq 1}|x|\nu(dx)<\infty,\\
               0\quad &\text{if}\quad \int_{|x|\leq 1}|x|\nu(dx)=\infty.
             \end{cases}   
\end{equation*}
The Lévy-Khintchine formula gives the characteristic function of $X$ at any time $t\geq 0$:
\begin{align*}\label{LK}\phi_t(u)=\exp\bigg(it u \gamma_{\nu}+t\int_{\R}\big(e^{iux}-1-iux\1_{|x|<1}\big)\nu(dx)\bigg),\quad u\in\R.
\end{align*} 
Let $\eps\in(0,1]$ and let us consider pure jumps Lévy processes with a law that is absolutely continuous with respect to the Lebesgue measure. Thanks to the L\'evy-It\^o decomposition, $X$ can be written as
\begin{align*}
X_t=tb_{\nu}+X_t^S+X_t^B, \quad t\ge 0,
\end{align*}
where \begin{equation*}\label{eq:bnueps}b_{\nu}:=\begin{cases}
             \int_{|x|\leq \eps} x\nu(dx)\quad &\text{if}\quad \int_{|x|\leq 1}|x|\nu(dx)<\infty,\\
                -\int_{\eps< |x|\le 1}x\nu(dx)\quad &\text{if}\quad \int_{|x|\leq 1}|x|\nu(dx)=\infty,
             \end{cases}   
\end{equation*} $X^B$ is a compound Poisson process independent of $X^S$ with intensity $\lambda=\nu(\R\setminus[-\eps,\eps])$ and jump density $q=p\1_{[-\eps,\eps]^c}/\lambda$ with $p(x)=\frac{\nu(dx)}{dx}$, $X^S$ is a centered martingale accounting for the jumps of $X$ of size smaller than $\eps$, i.e. $$X^{S}_{t}=\lim_{\eta\to 0} \bigg(\sum_{s\leq t} \Delta X_s\1_{\eta<|\Delta X_s|\leq \eps}-t\int_{\eta<|x|\leq \eps} x\nu(dx)\bigg),$$ where $\Delta X_r$ denotes the jump at time $r$ of the c\`adl\`ag process $X$: $\Delta X_r=X_r-\lim_{s\uparrow r} X_s.$

In the following we  write $X_t^B=\sum_{i=1}^{N_t}Y_i$ where $N$ is a Poisson process of intensity $\lambda$ independent of the sequence of i.i.d. (independent and identically distributed)   random variables $Y_i$ with common density $q$. We will denote by $f_\Delta^B$ the density of $X_\Delta^B$ given by 
\begin{align}\label{eq:fB}
f_\Delta^B(x)=\sum_{k=0}^\infty \exp(-\lambda\Delta)\frac{(\lambda\Delta)^{k}}{k!}q^{*k}(x),
\end{align}
where $q^{*k}$ is the $k$-th convolution of the density $q$ and $q^{*0}=\delta_0$ is the Dirac measure at point 0. We refer to \cite{Bertoin} for an overview of the main properties of Lévy processes, including a thorough discussion of the Lévy-Khintchine formula and the Lévy-Itô decomposition.

Consider the i.i.d. observations $\mathbb X=(X_{i\Delta}-X_{(i-1)\Delta})_{i=1}^n$ with $X_0=0$. 
Our aim is to estimate the density $g_{\Delta}$ of $Z_\Delta:=X_\Delta^S+\Delta b_\nu$ from $\mathbb X$ both under the assumption $\Delta> 0$ fixed and $\Delta\to 0$, and compute the $L^2$ integrated risk. For that we need to assume that $X$ is a Lévy process with a Lévy density $p$ satisfying
\begin{align}\label{Ass:p} 
\int_{[-\eta,\eta]}x^{2}p(x)dx\geq {M}{\eta^{2-\alpha}}, \quad \forall0<\eta\leq \eps
\tag{$\mathcal A_{M,\alpha}$}\end{align}
for some $0<\alpha< 2$ and $M>0$. 

This means that we will consider Lévy processes whose Lévy density satisfies  Orey's condition. In Section 2.4 of \cite{DJM}, a discussion on the relationship between the parameter $\alpha$ and the Blumenthal-Getoor index of the process can be found. In most cases, these two quantities coincide, although it is possible to construct technical counterexamples where equality does not hold.
Under \eqref{Ass:p}, Lemma \ref{lemmapicard} below ensures that $g_{\Delta}$ lies in $L^{2}(\R)$ as its characteristic function is in $L^{1}(\R)$ and is bounded.

The estimation strategy that we analyse is based on a spectral approach, and we  use the following notations. Given a random variable $Z$, 
$\phi_Z(u)=\E[e^{iu Z} ]$ denotes the characteristic function of $Z$.  For $g\in L^1(\R)$, $\mathcal F g(u) =\int e^{iux } g(x) d x$ is the Fourier transform. Moreover, we denote by $\| \cdot  \|$ the $L^2$-norm of functions, $\| g\|^2:= \int |g(x)|^2 d x$.  Given some function $g\in L^1(\R) \cap L^2(\R)$,  we denote by $g_m$, $m>0$, the uniquely defined function with Fourier transform $\mathcal F g_m= (\mathcal F g)\mathds{1}_{[-m,m]}$. Finally, $\Gamma$ denotes the incomplete Gamma function $\Gamma(a,s)=\int_{s}^{\infty}t^{a-1}e^{-t}d t$, where $a>0,\, s\ge 0$.

\section{Main results}\label{main}
\subsection{Estimation in the low frequency regime}
Let $\Delta>0$ and suppose that $\nu$ is known on $\R\setminus[-\eps,\eps]$ such that in the decomposition: $X_{t}=b_{\nu}t+X^{S}_{t}+X^{B}_{t}=Z_t+X^{B}_{t}$ the density of $X^{B}_{t}$ is entirely known.  Thanks to the convolution structure of the law of $X_\Delta$, it holds
$
\phi_{X_{\Delta}} = \phi_{Z_{\Delta}} \phi_{X^{B}_{\Delta}}.$
As  for fixed $\Delta>0$ and $\eps\in(0,1]$, $\phi_{X^{B}_{\Delta}} $ is known and never vanishes  
\begin{align}\label{eq:minor}|\phi_{X^{B}_{\Delta}} (u)|= |\exp(\lambda\Delta(\phi_{Y_{1}}(u)-1)|\ge e^{-2\lambda\Delta}>0,\end{align}
 the quantity $$\phi_{Z_{\Delta}}(u) =\frac{ \phi_{X_{\Delta}}(u)}{ \phi_{X^{B}_{\Delta}}(u)} $$ is well defined for all $u\in \R$. It can be estimated by
\begin{align}\label{eq:phiZ}
\hat \phi_{Z_{\Delta}}(u) =\frac{ 1}n\sum_{j=1}^{n}\frac{e^{iu(X_{j\Delta}-X_{(j-1)\Delta})}}{ \phi_{X^{B}_{\Delta}}(u)} .
\end{align} 
Let $m>0$, from \eqref{eq:phiZ} we derive an estimator of $g_{\Delta}$,  using a spectral cut-off as the latter quantity may not be in $L^{1}(\R)$: 
\begin{align}
\label{eq:estFg}\hat g_{{\Delta},m}(x) =\frac{ 1}{2\pi}\int_{-m}^{m}\hat \phi_{Z_{\Delta}}(u)e^{-iux}du.
\end{align}
The following result gives an upper bound for the integrated $L^{2}$-risk of $\hat g_{{\Delta},m}$.
\begin{theorem}\label{thm1}
Let $X$ be a Lévy process with a Lévy measure $\nu$ that satisfies \eqref{Ass:p}, for some $M>0$ and $\alpha\in(0,2)$. Let  $\Delta>0$, $\eps\in(0,1]$ and $g_\Delta$ be the density of $\Delta b_\nu+X_\Delta^S$ and $\hat g_{\Delta,m}$ the estimator defined in \eqref{eq:estFg}. Then,  for all $m\geq \pi/(2\eps)$ it holds that
$$\E[\|\hat{g}_{\Delta,m}-g_{\Delta}\|^2]\leq   \|g_{\Delta,m}-g_{\Delta}\|^2+\frac{e^{4\lambda\Delta}}{\pi} \frac mn , $$ and $ \|g_{\Delta,m}-g_{\Delta}\|^2\le  C\Delta^{-\frac1\alpha}\Gamma\left({1}/{\alpha},c\Delta m^{\alpha}\right)$ for  constants $c>0$ and $C>0$ depending on $\alpha$ and $M$ given in \eqref{eq:biasF}. \end{theorem}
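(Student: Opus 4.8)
The plan is to run the classical bias--variance decomposition in the Fourier domain. Since $X^S$ and $X^B$ are independent, each summand in \eqref{eq:phiZ} has mean $\phi_{X_\Delta}(u)/\phi_{X_\Delta^B}(u)=\phi_{Z_\Delta}(u)=\mathcal F g_\Delta(u)$, so $\hat\phi_{Z_\Delta}(u)$ is unbiased for $\phi_{Z_\Delta}(u)$ and $\mathcal F\hat g_{\Delta,m}=\hat\phi_{Z_\Delta}\1_{[-m,m]}$ has mean $\mathcal F g_{\Delta,m}$. By Lemma \ref{lemmapicard}, $g_\Delta\in L^1(\R)\cap L^2(\R)$, hence $g_{\Delta,m}\in L^2(\R)$ and Plancherel's identity applies; together with Fubini this gives
\begin{align*}
\E\big[\|\hat g_{\Delta,m}-g_\Delta\|^2\big]=\frac1{2\pi}\int_{|u|>m}|\phi_{Z_\Delta}(u)|^2\,du+\frac1{2\pi}\int_{-m}^{m}\E\big[\,|\hat\phi_{Z_\Delta}(u)-\phi_{Z_\Delta}(u)|^2\,\big]\,du .
\end{align*}
The first term equals $\|g_{\Delta,m}-g_\Delta\|^2$. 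For the second, the i.i.d.\ structure of the increments yields $\E[|\hat\phi_{Z_\Delta}(u)-\phi_{Z_\Delta}(u)|^2]=n^{-1}\mathrm{Var}\big(e^{iuX_\Delta}/\phi_{X_\Delta^B}(u)\big)\le n^{-1}|\phi_{X_\Delta^B}(u)|^{-2}\le e^{4\lambda\Delta}/n$ by the uniform lower bound \eqref{eq:minor}, and integrating over $[-m,m]$ produces the announced $\frac{e^{4\lambda\Delta}}{\pi}\frac mn$ (this part needs no restriction on $m$).

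The heart of the argument is the bias term, i.e.\ the decay of $|\phi_{Z_\Delta}|$. Writing $\phi_{Z_\Delta}=\phi_{X_\Delta}/\phi_{X_\Delta^B}$ via the Lévy--Khintchine formula, the large-jump part of the exponent cancels and the remaining drift and compensator terms are purely imaginary, so that $|\phi_{Z_\Delta}(u)|^2=\exp\big(-2\Delta\int_{|x|\le\eps}(1-\cos(ux))p(x)\,dx\big)$. The crucial estimate is then the lower bound
\begin{align*}
\int_{|x|\le\eps}\big(1-\cos(ux)\big)p(x)\,dx\ \ge\ c_0\,M\,|u|^{\alpha},\qquad |u|\ge \tfrac{\pi}{2\eps},
\end{align*}
for some $c_0=c_0(\alpha)>0$. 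To obtain it I would use $1-\cos t\ge \tfrac{2}{\pi^2}t^2$ for $|t|\le\pi$, restrict the integral to $|x|\le\eta$ with $\eta:=\min(\eps,\pi/|u|)\le\eps$, and apply \eqref{Ass:p} to bound the restricted integral below by $\tfrac{2u^2}{\pi^2}M\eta^{2-\alpha}$. Distinguishing the cases $|u|\ge\pi/\eps$ (take $\eta=\pi/|u|$) and $\tfrac{\pi}{2\eps}\le|u|<\pi/\eps$ (take $\eta=\eps$, using that $|ux|<\pi$ on $|x|\le\eps$ and $\eps\ge\pi/(2|u|)$) gives a lower bound of order $|u|^\alpha$ in both regimes. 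This threshold analysis near $|u|\approx\pi/\eps$ — which is precisely why the hypothesis $m\ge\pi/(2\eps)$ is imposed — is the only step I expect to be genuinely delicate.

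It then remains to integrate: $\|g_{\Delta,m}-g_\Delta\|^2\le \frac1{2\pi}\int_{|u|>m}e^{-2c_0M\Delta|u|^{\alpha}}\,du=\frac1\pi\int_m^\infty e^{-2c_0M\Delta u^{\alpha}}\,du$, and the change of variable $t=2c_0M\Delta u^{\alpha}$ turns this into $\frac1{\pi\alpha}(2c_0M\Delta)^{-1/\alpha}\,\Gamma\!\big(1/\alpha,\,2c_0M\Delta m^{\alpha}\big)$. This is the claimed bound with $c=2c_0M$ and $C=\frac1{\pi\alpha}(2c_0M)^{-1/\alpha}$, both depending only on $\alpha$ and $M$. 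Apart from the characteristic-exponent lower bound, every step is routine bookkeeping with Plancherel's identity and the i.i.d.\ structure of the sample.
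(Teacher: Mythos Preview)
Your proposal is correct and follows essentially the same bias--variance decomposition in the Fourier domain as the paper. The only difference is cosmetic: the paper packages the characteristic-function decay as Lemma~\ref{lemmapicard}, whose proof takes $\eta=\pi/(2|u|)$ together with the sharper inequality $1-\cos t\ge 4t^{2}/\pi^{2}$ on $|t|\le\pi/2$, so the case split near $|u|\approx\pi/\eps$ that you flag as ``genuinely delicate'' is in fact unnecessary --- a single choice of $\eta$ works for all $|u|\ge\pi/(2\eps)$.
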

\begin{proof}
To control the integrated $L^{2}$-risk
 we write the decomposition \begin{align*}\E[\|\hat{g}_{\Delta,m}-g_{\Delta}\|^2]&=\|g_{\Delta,m}-g_{\Delta}\|^2+\E[\|\hat{g}_{\Delta,m}-g_{\Delta,m}\|^2]\\ & = \|g_{\Delta,m}-g_{\Delta}\|^2+\frac{1}{2\pi}\int_{- m}^{ m} \E[|\hat\phi_{Z_{\Delta}}(u)-\phi_{Z_{\Delta}}(u)|^2]d u.\end{align*}   The first term is the standard bias term for which we can write using Plancherel equality, Lemma \ref{lemmapicard}, the fact that $m\geq\pi/(2\eps)$ and  \eqref{Ass:p}, that
 \begin{align}
 \|g_{\Delta,m}-g_{\Delta}\|^2&=\frac1{2\pi}\int_{[-m,m]^{c}}|\phi_{Z_{\Delta}}(u)|^{2}du\le \frac{1}{\pi}\int_{m}^{\infty}e^{-\frac{2^{\alpha+1}M}{\pi^{\alpha}}u^{\alpha}\Delta}du \nonumber\\ &=\frac{1}{2\alpha(2M\Delta)^{\frac1\alpha}}\Gamma\left(\frac{1}{\alpha},\frac{2^{\alpha+1}M\Delta m^{\alpha}}{\pi^{\alpha}}\right).  \label{eq:biasF}
 \end{align}
 For the variance term, using that 
 \begin{align*}\E[|\hat\phi_{Z_{\Delta}}(u)-\phi_{Z_{\Delta}}(u)|^2]&=\frac{1}{|\phi_{{X^{B}_{\Delta}}}(u)|^{2}}\E[|\hat\phi_{X_{\Delta}}(u)-\phi_{X_{\Delta}}(u)|^2]\\&=\frac{1}{|\phi_{{X^{B}_{\Delta}}}(u)|^{2}}\V\left(\frac1n\sum_{j=1}^{n}e^{iu(X_{j\Delta}-X_{(j-1)\Delta})}\right)= \frac{1-|\phi_{{X_{\Delta}}}(u)|^{2}}{|\phi_{{X^{B}_{\Delta}}}(u)|^{2}}\frac1n,
  \end{align*} we easily get with \eqref{eq:minor} that
 $\E[\|\hat{g}_{\Delta,m}-g_{\Delta,m}\|^2]\le \frac{e^{4\lambda\Delta}}{\pi}\frac{m}{n}.$ Gathering both inequalities completes the proof.
\end{proof}
\begin{remark}\label{rmq1} To find the value $m^{\star}$ that minimizes the bound in Theorem \ref{thm1} we differentiate this bound in $m$ using \eqref{eq:biasF}. If $\lambda\Delta=O(1),$ and $n$ is such that $\log n\ge 4\lambda\Delta$ we find that $m^{\star}$ is solution of 
$$ ne^{-4\lambda\Delta}=e^{\frac{2^{\alpha+1}M}{\pi^{\alpha}}\Delta {m^{\star}}^{\alpha}}\quad \Longleftrightarrow\quad m^{\star}=\frac\pi2\left(\frac{\log n-4\lambda\Delta}{2M\Delta}\right)^{\frac1\alpha}.$$ With this optimal cutoff, the rate implied by Theorem \ref{thm1} is
$$\E[\|\hat{g}_{\Delta,m^{\star}}-g_{\Delta}\|^2]\le \frac{1}{2\alpha(2M\Delta)^{\frac1\alpha}}\Gamma\left(\tfrac{1}{\alpha},\log n-4\lambda\Delta\right)+\frac1{2(2M)^{\frac{1}{\alpha}}}\left(\frac{\log n}{\Delta}\right)^{\frac1\alpha}\frac{e^{4\lambda\Delta}}{ n}.$$ Using that $\Gamma(a,s)\sim e^{-s}s^{a-1} $ as $s\to\infty$, we get that   \begin{align}\label{eq:rate}\E[\|\hat{g}_{\Delta,m^{\star}}-g_{\Delta}\|^2]\le C\left(\frac{\log n}{\Delta}\right)^{\frac1\alpha}\frac{e^{4\lambda\Delta}}{ n}\end{align}
 for some positive constant $C$. This is an almost (up to a logarithmic factor) parametric rate (recall that $\Delta>0$ is fixed), which is consistant with the fact that: i)
 we are in a well posed deconvolution problem (see \eqref{eq:minor}),
ii)
 under the assumptions of Lemma \ref{lemmapicard}, $g_{\Delta}$ is $C^{k}(\R)$ for all $k\in\N$. Note that if $\Delta$ goes rapidly to 0 (e.g. if  $\Delta=\log\log n/n$ and $\alpha=1$) the upper-bound \eqref{eq:rate} does not tend to 0 because of the logarithmic term. Nonetheless, \cite{10.1214/aos/1176347736} (Example 2) and \cite{butucea2004deconvolution} seem to indicate that for fixed $\Delta$ the logarithmic term $(\log n)^{1/\alpha}$ is optimal. \end{remark}


The problem of finding a data driven way to select $m$ is studied in Section \ref{sec:adapt}. The optimal cutoff $m^{\star}$ depends on the unknown quantity $\alpha$ appearing in Assumption \eqref{Ass:p}.
 Interestingly, the adaptation problem of selecting $m$ consists in estimating a possible $\alpha$ for condition \eqref{Ass:p}. If  \eqref{Ass:p} is satisfied for a given $\alpha_2$, it will also be satisfied for any $\alpha_1<\alpha_2$. The choice of the maximum $\alpha$ such that the hypothesis is satisfied is however important in regimes where $\Delta<1$, indeed the function $\alpha\mapsto \Delta^{-1/\alpha}$ is decreasing, therefore to attain the optimal rate the largest value of $\alpha$ such that \eqref{Ass:p} is satisfied should be selected (see also Theorem \ref{thmLB}).

\subsection{Another strategy in the high frequency regime}\label{HF}
In this section we consider the case where $\Delta \to 0$. Despite this limit, it remains feasible to estimate the density of $Z_\Delta$ using the estimator $\hat g_{{\Delta},m}$ as defined in \eqref{eq:estFg}. Employing similar arguments to those discussed in the preceding paragraph, one can demonstrate its consistency as long as ${n\Delta^{{1}/{\alpha}}}/{(\log n)^{1/\alpha}}\to \infty$, and its $L^2$ rate of convergence is still $n^{-1}\Delta^{-{1}/{\alpha}}$ up to a log factor.

In the high frequency setting, it is possible to omit the assumption that $\phi_{X^{B}_{\Delta}}$ is known since in this asymptotic $\phi_{X^{B}_{\Delta}}$ is close to 1. We therefore propose to consider a second estimator of $g_{\Delta}$,  defined as follows \begin{align}
\label{eq:estFg0}\tilde g_{{\Delta},m}(x) =\frac{ 1}{2\pi}\int_{-m}^{m} \hat \phi_{X_{\Delta}}(u)e^{-iux}du.
\end{align}
Note that if $\Delta$ is fixed, \eqref{eq:estFg0} is an estimator of the density of $X_{\Delta}$ (see Section 4 of \cite{kappus2015nonparametric} and \cite{DJM}).

\begin{theorem}\label{thm2}
Let $X$ be a Lévy process with a Lévy measure $\nu$ that satisfies \eqref{Ass:p}, for some $M>0$ and $\alpha\in(0,2)$. Let  $\Delta\in(0,1)$ and $\eps\in(0,1]$ be such that $\lambda\Delta\leq 1$, where $\lambda=\nu(\R\setminus[-\eps,\eps])$. Then, there exist $K,\,\kappa>0$ depending on $\alpha$ and $M$ such that for all $m\geq 1$ it holds:
$$\E[\|\tilde{g}_{\Delta,m}-g_{\Delta}\|^2]\leq K\left( \|g_{\Delta,m}-g_{\Delta}\|^2+ \frac mn + \lambda^2\Delta^{2-\frac{1}{\alpha}}e^{-\kappa\frac{\Delta}{\eps^{\alpha}}}+\lambda^2\Delta^{2}\eps^{-1}\right),$$
and $ \|g_{\Delta,m}-g_{\Delta}\|^2\le C\Delta^{-\frac1\alpha}\Gamma\left({1}/{\alpha},c\Delta m^{\alpha}\right),$ for  constants $c>0$ and $C>0$ depending on $\alpha$ and $M$ given in \eqref{eq:biasF}. 
\end{theorem}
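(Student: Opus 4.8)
The plan is to reproduce the bias/variance decomposition used in the proof of Theorem~\ref{thm1}; the only genuinely new point is the control of the systematic error coming from the fact that $\tilde g_{\Delta,m}$ is built from the empirical characteristic function of $X_\Delta$ itself rather than from an estimator of $\phi_{Z_\Delta}$ — this is the price for no longer dividing by $\phi_{X^B_\Delta}$.

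First I would note that $\mathcal F\tilde g_{\Delta,m}=\hat\phi_{X_\Delta}\mathds{1}_{[-m,m]}$, so $\tilde g_{\Delta,m}\in L^2(\R)$, and by Plancherel together with the orthogonality of functions whose Fourier transforms live respectively on $[-m,m]$ and on $[-m,m]^c$ (recall $g_\Delta\in L^2$ by Lemma~\ref{lemmapicard}),
\[
\|\tilde g_{\Delta,m}-g_\Delta\|^2=\frac1{2\pi}\int_{-m}^{m}|\hat\phi_{X_\Delta}(u)-\phi_{Z_\Delta}(u)|^2\,du+\|g_{\Delta,m}-g_\Delta\|^2 .
\]
Taking expectations and splitting $\hat\phi_{X_\Delta}(u)-\phi_{Z_\Delta}(u)$ into its centered part $\hat\phi_{X_\Delta}(u)-\phi_{X_\Delta}(u)$ plus the deterministic part $\phi_{X_\Delta}(u)-\phi_{Z_\Delta}(u)$, exactly as in Theorem~\ref{thm1}, gives $\E[|\hat\phi_{X_\Delta}(u)-\phi_{Z_\Delta}(u)|^2]=\frac{1-|\phi_{X_\Delta}(u)|^2}{n}+|\phi_{X_\Delta}(u)-\phi_{Z_\Delta}(u)|^2\le\frac1n+|\phi_{X_\Delta}(u)-\phi_{Z_\Delta}(u)|^2$, hence
\[
\E\bigl[\|\tilde g_{\Delta,m}-g_\Delta\|^2\bigr]\le\|g_{\Delta,m}-g_\Delta\|^2+\frac{m}{\pi n}+\frac1{2\pi}\int_{-m}^{m}|\phi_{X_\Delta}(u)-\phi_{Z_\Delta}(u)|^2\,du .
\]
The bias term is bounded by \eqref{eq:biasF} exactly as in Theorem~\ref{thm1} (this is also the second displayed inequality of the statement), and $m/(\pi n)$ is the announced variance term, so the whole proof reduces to controlling the last integral.

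For that I would use $\phi_{X_\Delta}=\phi_{Z_\Delta}\,\phi_{X^B_\Delta}$ and $\phi_{X^B_\Delta}(u)=\exp(\lambda\Delta(\phi_{Y_1}(u)-1))$; since $\mathrm{Re}(\lambda\Delta(\phi_{Y_1}(u)-1))\le0$, the elementary inequality $|e^{z}-1|\le|z|$ on $\{\mathrm{Re}\,z\le0\}$ (or, alternatively, $|e^z-1|\le|z|e^{|z|}$ with $|\lambda\Delta(\phi_{Y_1}(u)-1)|\le2\lambda\Delta\le2$) yields
\[
|\phi_{X_\Delta}(u)-\phi_{Z_\Delta}(u)|=|\phi_{Z_\Delta}(u)|\,|\phi_{X^B_\Delta}(u)-1|\le\lambda\Delta\,|\phi_{Z_\Delta}(u)|\,|\phi_{Y_1}(u)-1| .
\]
I then split $\int_{-m}^{m}$ at the frequency $\pi/(2\eps)$. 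On $\{|u|\le\pi/(2\eps)\}$ I bound crudely $|\phi_{Z_\Delta}(u)|\le1$ and $|\phi_{Y_1}(u)-1|\le2$, producing a contribution of order $\lambda^2\Delta^2\eps^{-1}$. On $\{|u|>\pi/(2\eps)\}$ I invoke Lemma~\ref{lemmapicard} and \eqref{Ass:p} to get $|\phi_{Z_\Delta}(u)|^2\le e^{-c_0\Delta|u|^\alpha}$ with $c_0=2^{\alpha+1}M\pi^{-\alpha}$ (the threshold $\pi/(2\eps)$ being exactly what is needed for \eqref{Ass:p}, assumed only on $[-\eps,\eps]$, to be applicable), bound again $|\phi_{Y_1}(u)-1|^2\le4$, and use $u^\alpha\ge\frac12u^\alpha+\frac12(\pi/(2\eps))^\alpha$ on this range to factor out $e^{-\kappa\Delta/\eps^\alpha}$ with $\kappa=\frac12 c_0(\pi/2)^\alpha$; the leftover integral $\int_0^\infty e^{-\frac{c_0}{2}\Delta u^\alpha}\,du$ is of order $\Delta^{-1/\alpha}$, so this piece is of order $\lambda^2\Delta^{2-1/\alpha}e^{-\kappa\Delta/\eps^\alpha}$. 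Adding the two contributions and choosing $K$ larger than all the numerical constants generated along the way completes the proof.

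The computation is essentially mechanical; the only delicate choice is the cutoff $\pi/(2\eps)$ in the last integral. It is forced by Lemma~\ref{lemmapicard}, and it is precisely this split that makes the two $\eps$-dependent remainders appear separately: the polynomial $\lambda^2\Delta^2\eps^{-1}$ from the low frequencies, where only trivial bounds are available, and the exponentially small $\lambda^2\Delta^{2-1/\alpha}e^{-\kappa\Delta/\eps^\alpha}$ from the high frequencies, where the super-smoothness of $g_\Delta$ is exploited. The assumption $\lambda\Delta\le1$ is used only to ensure that these remainder terms are genuinely negligible (e.g.\ $\lambda^2\Delta^{2-1/\alpha}\le\Delta^{-1/\alpha}$).
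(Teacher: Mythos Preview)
Your proof is correct and follows essentially the same route as the paper: the same Plancherel decomposition, the same variance bound, and the same frequency split at $\pi/(2\eps)$ combined with Lemma~\ref{lemmapicard} to control the systematic error term. The only cosmetic differences are that the paper factors $\phi_{X_\Delta}-\phi_{Z_\Delta}=\phi_{X_\Delta}\bigl(1-1/\phi_{X^B_\Delta}\bigr)$ and bounds $|1-1/\phi_{X^B_\Delta}|\le 2\lambda\Delta e^{2\lambda\Delta}$ (hence actually using $\lambda\Delta\le1$ to absorb the exponential into the constant $K$), and then extracts the $e^{-\kappa\Delta/\eps^\alpha}$ factor via the incomplete-Gamma inequality $\Gamma(s,x)\le 2^s e^{-x/2}\Gamma(s)$ rather than your splitting $u^\alpha\ge\tfrac12u^\alpha+\tfrac12(\pi/(2\eps))^\alpha$; your factorization $\phi_{Z_\Delta}(\phi_{X^B_\Delta}-1)$ with $|e^z-1|\le|z|$ on $\{\mathrm{Re}\,z\le0\}$ is in fact slightly cleaner and makes the constant independent of $\lambda\Delta$.
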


 Computations developed in Remark \ref{rmq1} remain valid to realise the bias-variance tradeoff between the first two terms in the above upper bound. It follows that the rate of convergence of $\tilde{g}_{\Delta,m}$, choosing $m^{\star}$ as in Remark \ref{rmq1} and if $\Delta\eps^{-\alpha}\le 1$, is of order of
 \begin{align}\label{rate2}
 \max\left\{\frac{(\log n)^{\frac1\alpha}}{n\Delta^{\frac1\alpha}}, \Delta^{2-\frac{1}{\alpha}}\right\}
 \end{align}which is of order of $(\log n)^{1/\alpha}({n\Delta^{1/\alpha}})^{-1}$ if $\alpha>{1}/{2}$. We also underline the fact that the condition $\Delta\lambda\le 1$ is equivalent to $\Delta\eps^{-\alpha}\le 1$ for instance for $\alpha$-stable Lévy processes.
Furthermore we notice that for $\alpha\leq {1}/{2}$, the consistency of $\tilde{g}_{\Delta,m}$ is not ensured. 
Finally, we observe that it is always possible to estimate $g_\Delta$ with a rate of order of $(\log n)^{1/\alpha}({n\Delta^{1/\alpha}})^{-1}$ for any $\alpha\in (0,2)$ by means of the estimator $\hat{g}_{\Delta,m}$ defined in \eqref{eq:estFg}. However, such an estimator  requires the knowledge the law of $X_\Delta^B$, whereas this assumption is not needed to define $\tilde{g}_{\Delta,m}$.

\subsection{Comments}
More generally, a natural question is the necessity of assuming knowledge of the law of $X_\Delta^B$. In Section \ref{HF}, taking advantage of the high frequency regime, the contribution of the process $X^{B}_{\Delta} $ has been ignored at the cost of the term $\Delta^{2-1/\alpha}$ in the bound \eqref{rate2} imposing the constraint $\alpha\ge 1/2$. In a high-frequency regime such that $n\Delta^2\to 0$, it may be possible to loosen such an assumption. Indeed, then it holds that (using that $\log(1+x)\ge x-\tfrac{x^{2}}2$ for $|x|<1$)  $$\PP(\forall i\in\{1,\ldots,n\},\, N_{i\Delta}-N_{(i-1)\Delta}\in\{0,1\})\ge e^{-\frac12n(\lambda\Delta)^{2}}\ge 1-\frac12n(\lambda\Delta)^2,$$ informally increments of $X$ larger than $\eps$ in absolute value can be considered as realizations of $Y_{1}$ defined in Section \ref{sec:not} (as studied in \cite{duval2021spectral}). Therefore, the quantity $\phi_{X_\Delta^B}$ can be estimated from the increments $X_{\Delta}$ such that $X_{\Delta}\ge \eps$ allowing to estimate the characteristic function of $Y_{1}$ and therefore $\phi_{X^{B}_{\Delta}}$ using that $ \phi_{X^{B}_{\Delta}} = \exp(\lambda\Delta(\phi_{Y_{1}}-1))$. 
Still, computations become considerably longer and tedious, but they should not affect the convergence rate.

The low-frequency case poses a significantly greater challenge. One possibility would involve estimating $\phi_{X_\Delta^B}$ via a plug-in of an estimator of the Lévy density. 
More precisely, we note that a strategy that does not require knowledge of the Lévy measure $\nu$ could be formulated as follows:
\begin{enumerate}
    \item
    Estimate $p(x) = d\nu(x)/dx$, the Lévy density (see e.g., \cite{comte2010nonparametric}). For instance, if $\int (x \wedge 1) \nu(dx) < \infty$, $\Delta > 0$ is fixed and $\eps=1$ for simplicity, one could use:
    \begin{equation*}
        \hat{p}_{m_{1}}(x) = \frac{1}{2\pi x} \int_{-m_{1}}^{m_{1}} \hat{\Psi}_{\Delta}(u)e^{-iux}du,
    \end{equation*}
    where 
    \begin{equation*}
        \hat{\Psi}_{\Delta}(u) = \frac{\frac{1}{n}\sum_{j=1}^{n} X_{j,\Delta} e^{iuX_{j,\Delta}}}{\Delta \frac{1}{n} \sum_{j=1}^{n} e^{iuX_{j,\Delta}}}, \quad X_{j,\Delta} = X_{j\Delta} - X_{(j-1)\Delta}.
    \end{equation*}
 \item
    Estimate the characteristic function of the increment of a small jump by:
    \begin{equation*}
        \hat{\Phi}_{Z_{\Delta}, m_{1}}(u) = \exp\left(\Delta \int_{-1}^{1} (e^{iuy} - iuy - 1) \hat{p}_{m_{1}}(y) dy\right).
    \end{equation*}
\item
    Derive an estimator of $g_{\Delta}$ as follows:
    \begin{equation*}
        \tilde{g}_{\Delta, m_{1}, m_{2}}(x) = \frac{1}{2\pi} \int_{-m_{2}}^{m_{2}} e^{-ivx} \hat{\Phi}_{Z_{\Delta}, m_{1}}(v) dv.
    \end{equation*}
\end{enumerate}
The theoretical analysis of this estimator raises significant challenges as it  involves controlling two bias terms and a tricky variance term where  $\hat{p}_{m_{1}}$ appears under an integral within an exponential term. 
Furthermore, there are two cutoffs, $m_{1}$ and $m_{2}$, for which determining an adaptive choice would be non-trivial. Numerically, this method is also computationally expensive due to the need for approximating three separate integrals.
A related strategy was adopted in Section 4 of \cite{kappus2015nonparametric} to estimate the density of $X_{\Delta}$ for $\Delta=1$. Even in that direct context, it is unclear whether it  leads to minimax results, and would require different assumptions on the process than those adopted in the present work. This approach lies outside the scope of this paper's objectives.

\subsection{Lower bound result}

In this section, we show that the convergence rates of our estimators are minimax, up to a logarithmic factor. The proof relies on establishing that the estimation of
the density of the small jumps $g_{\Delta}\in (\mathcal A_{M,\alpha_0})$ is an harder problem than estimating the density of a symmetric $\alpha_0$-stable process from discrete direct observations of the process at sampling rate $\Delta$, i.e.
  \begin{align*}\inf_{\hat g_{\Delta}}\sup_{g_{\Delta}\in \mathcal A_{M,\alpha_0}}\E\|\hat g_{\Delta}-g_{\Delta}\|^{2}\geq \inf_{\hat f_{\Delta}}\sup_{f_{\Delta}\in {\mathcal S}_{\alpha_0}}\E\|\hat f_{\Delta}-f_{\Delta}\|^{2},
  \end{align*}
where $\mathcal S_{\alpha_0}$ denotes the class of symmetric $\alpha$-stable densities with $\alpha\in[\alpha_0,2)$ and characteristic function given by $e^{-\Delta|u|^{\alpha_0}}$ and the second infimum is taken over all possible estimators $\hat f_\Delta$ of the density $f_\Delta$ of $X_{\Delta}$. Then, we leverage the lower bound derived for the direct estimation problem in Theorem 3 of \cite{DJM}.  
 \begin{theorem}\label{thmLB}
Let $\alpha_0\in(0,2)$, $0<M\le  F({\alpha_{0}})$ where $F$ is defined in \eqref{eq:MajM}, and $0<\Delta<  e^{-\frac{4}{2-\alpha_0}}$. There exists $K>0$ such that  for any $n$ satisfying $n\log^2(\Delta)\geq K$ it holds
$$\inf_{\hat g_{\Delta}}\sup_{g_{\Delta}\in \mathcal A_{M,\alpha_0}}\E\|\hat g_{\Delta}-g_{\Delta}\|^{2}\geq \frac{c_{0}}{n\Delta^{\frac{1}{\alpha_0}}},$$
 where the infimum is taken over all possible estimators $\hat g_{\Delta}$ of $g_{\Delta}$ and $c_{0}$ is a strictly positive constant only depending on $\alpha_0$. 
\end{theorem}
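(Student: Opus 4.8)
The plan is to establish the comparison inequality displayed above (at worst up to an absolute constant and a remainder of smaller order than the target rate, which is all that is needed), and then read off the conclusion from Theorem~3 of \cite{DJM}, which lower bounds the minimax risk over $\mathcal S_{\alpha_0}$ by $c_0/(n\Delta^{1/\alpha_0})$ whenever $n\log^2(\Delta)\ge K$. Since that last invocation is immediate, essentially all the work lies in the comparison inequality, which I would split into an \emph{embedding} step and an \emph{estimator-transfer} step.

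\emph{Embedding.} For $\alpha\in[\alpha_0,2)$ let $X^{(\alpha)}$ be the symmetric $\alpha$-stable Lévy process with Lévy density $p_\alpha(x)=c_\alpha|x|^{-1-\alpha}$, normalised so that $\phi_{X^{(\alpha)}_t}(u)=e^{-t|u|^\alpha}$. One computes $\int_{-\eta}^{\eta}x^2p_\alpha(x)\,dx=\tfrac{2c_\alpha}{2-\alpha}\eta^{2-\alpha}$, which is $\ge M\eta^{2-\alpha_0}$ for every $0<\eta\le\eps\le1$ as soon as $M\le\tfrac{2c_\alpha}{2-\alpha}\eta^{\alpha_0-\alpha}$; since $\alpha\ge\alpha_0$ and $\eta\le1$ the right-hand side is $\ge\tfrac{2c_\alpha}{2-\alpha}$, so the hypothesis $M\le F(\alpha_0)$ of \eqref{eq:MajM} (with $F$ essentially $\inf_\alpha\tfrac{2c_\alpha}{2-\alpha}$ over a fixed right-neighbourhood of $\alpha_0$) is exactly what guarantees that $X^{(\alpha)}$ satisfies \eqref{Ass:p}. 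Thus $g_\Delta^{(\alpha)}:=g_\Delta(X^{(\alpha)})\in\mathcal A_{M,\alpha_0}$, while the increments of $X^{(\alpha)}$ are i.i.d.\ with density $f_\Delta^{(\alpha)}$: the observation law in the small-jumps problem driven by $X^{(\alpha)}$ is identical to that in the direct problem for $f_\Delta^{(\alpha)}\in\mathcal S_{\alpha_0}$. Restricting the supremum to this subfamily already gives $\inf_{\hat g}\sup_{\mathcal A_{M,\alpha_0}}\E\|\hat g-g_\Delta\|^2\ge\inf_{\hat g}\sup_{\alpha\in[\alpha_0,2)}\E_{X^{(\alpha)}}\|\hat g-g_\Delta^{(\alpha)}\|^2$.

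\emph{Transfer.} Given any estimator $\hat g$ of $g_\Delta^{(\alpha)}$ I would set $\hat f:=\hat g*f_\Delta^{B,(\hat\alpha)}$, where $f_\Delta^{B,(\hat\alpha)}$ is the (explicit) large-jump density attached to a preliminary estimator $\hat\alpha$ of $\alpha$ — equivalently, restrict the stable family to a shrinking window $[\alpha_0,\alpha_0+\delta_n]$ and use the fixed plug-in $f_\Delta^{B,(\alpha_0)}$, the two points realising DJM's bound lying in that window. Since $\widehat{f_\Delta^{(\alpha)}}=\widehat{g_\Delta^{(\alpha)}}\,\widehat{f_\Delta^{B,(\alpha)}}$ with $e^{-2}\le e^{-2\lambda_\alpha\Delta}\le|\widehat{f_\Delta^{B,(\alpha)}}|\le1$ (the lower bound using $\lambda_\alpha\Delta\le1$, valid for $0<\Delta<e^{-4/(2-\alpha_0)}$ after adjusting constants), Plancherel gives $\|\hat f-f_\Delta^{(\alpha)}\|\le\|\hat g-g_\Delta^{(\alpha)}\|+\|\widehat{f_\Delta^{B,(\hat\alpha)}}-\widehat{f_\Delta^{B,(\alpha)}}\|_\infty\,\|g_\Delta^{(\alpha)}\|$. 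Differentiating $\alpha\mapsto\widehat{f_\Delta^{B,(\alpha)}}$ and using $|e^z-e^w|\le|z-w|$ when $\Re z,\Re w\le0$ yields $\|\widehat{f_\Delta^{B,(\hat\alpha)}}-\widehat{f_\Delta^{B,(\alpha)}}\|_\infty\lesssim\Delta|\hat\alpha-\alpha|$, while $\|g_\Delta^{(\alpha)}\|^2\le e^4\|f_\Delta^{(\alpha)}\|^2\asymp\Delta^{-1/\alpha}\le\Delta^{-1/\alpha_0}$; hence the correction is $\lesssim\Delta^{2-1/\alpha_0}|\hat\alpha-\alpha|^2$. Because $\alpha$ is estimable from $n$ i.i.d.\ draws of $f_\Delta^{(\alpha)}$ at the rate $|\hat\alpha-\alpha|\lesssim(|\log\Delta|\sqrt n)^{-1}$ forced by the $\log\Delta$ factor in the $\alpha$-score (with exponential concentration, so the complementary event is negligible), the correction is $\lesssim\Delta^{2}/(n\log^2\Delta)=o(1/(n\Delta^{1/\alpha_0}))$ for $\Delta$ small. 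Taking infima and suprema, $\inf_{\hat g}\sup_\alpha\E\|\hat g-g_\Delta^{(\alpha)}\|^2\ge\inf_{\hat f}\sup_{\mathcal S_{\alpha_0}}\E\|\hat f-f_\Delta\|^2-o(1/(n\Delta^{1/\alpha_0}))$, and Theorem~3 of \cite{DJM} then delivers the stated bound with $c_0>0$ depending only on $\alpha_0$.

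The main obstacle is this transfer step: one must show that not knowing the large-jump law of the stable process exactly (i.e.\ not knowing $\alpha$) costs only a term of smaller order than $1/(n\Delta^{1/\alpha_0})$. This is precisely where the smallness of $\Delta$ (yielding $\lambda_\alpha\Delta\le1$ and $\Delta^{2-1/\alpha_0}\ll\Delta^{-1/\alpha_0}$) and the condition $n\log^2(\Delta)\ge K$ are genuinely used, and it requires either controlling a preliminary estimator of $\alpha$ at the fast rate $(|\log\Delta|\sqrt n)^{-1}$ together with a tail bound, or the more bookkeeping-heavy route through a shrinking $\alpha$-window on which a fixed plug-in suffices. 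A lighter, purely computational point is to pin down $F(\alpha_0)$ in \eqref{eq:MajM} so that the stable processes used genuinely satisfy \eqref{Ass:p} and $\lambda_\alpha\Delta\le1$ holds throughout.
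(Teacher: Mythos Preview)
Your overall strategy --- embed the symmetric stable family into $\mathcal A_{M,\alpha_0}$, transfer the small-jump estimation problem to the direct density-estimation problem, and then invoke Theorem~3 of \cite{DJM} --- matches the paper's. The embedding step is essentially the same. The transfer step, however, is where your route diverges substantially.

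The paper's transfer is a one-line application of Young's inequality. Writing $h_\Delta$ for the law of $X_\Delta^B$ as a density with respect to $\mu=\delta_0+\mathrm{Leb}$, one has $\|h_\Delta\|_{1,\mu}=1$, and convolution with $h_\Delta$ is an $L^2$-contraction:
\[
\|\hat g-g_\Delta\|\,\|h_\Delta\|_{1,\mu}\ \ge\ \|(\hat g-g_\Delta)\star h_\Delta\|\ =\ \|\hat g\star h_\Delta-f_\Delta\|.
\]
From this the paper reads off directly that the minimax risk for $g_\Delta$ over $\mathcal A_{M,\alpha_0}$ dominates that for $f_\Delta$ over $\mathcal S_{\alpha_0}$, with no preliminary estimation of $\alpha$, no concentration bounds, and no correction terms to track.

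Your approach instead builds $\hat f=\hat g\star f_\Delta^{B,(\hat\alpha)}$ from a preliminary estimator $\hat\alpha$ and then controls the plug-in error. This introduces several steps you leave as assertions: the rate $|\hat\alpha-\alpha|\lesssim(|\log\Delta|\sqrt n)^{-1}$ for estimating the stability index (plausible from a Fisher-information heuristic, but not established), the exponential concentration needed to discard the complementary event, and the constant bookkeeping to ensure that the correction $\Delta^{2-1/\alpha_0}/(n\log^2\Delta)$ is \emph{strictly} smaller than $c_0/(n\Delta^{1/\alpha_0})$ throughout the allowed range of $\Delta$ --- note that for $\Delta$ near the fixed upper threshold $e^{-4/(2-\alpha_0)}$ the ratio $\Delta^2/\log^2\Delta$ is a number, not something tending to zero, so ``$o(1/(n\Delta^{1/\alpha_0}))$'' is not the right description. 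None of these gaps is fatal, but they represent genuine additional work. The paper's route shows they are entirely avoidable: the obstacle you single out as ``main'' --- not knowing the large-jump law exactly --- simply never arises once one observes that convolving with any probability measure is an $L^2$-contraction.
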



Theorem \ref{thmLB} above allows us to assert that the rates found in Theorems \ref{thm1} and \ref{thm2} are nearly minimax. This conclusion was far from obvious. While it was evident that the rate in $n$ in \eqref{eq:rate} and \eqref{rate2} could not be improved, as it already represents a parametric rate, the dependence of the rate on $\Delta$ and $\alpha$ was less clear. For fixed $\Delta$, the problem remains relatively straightforward, but it becomes significantly more intricate in the case of high-frequency observations. To the best of our knowledge, this is the first result establishing minimax optimality for estimating the density of the law of small jumps of Lévy processes. 
Furthermore, since \cite{DJM} demonstrates that the minimax rate for estimating the density $f_\Delta$ is the same as the one derived in this work for estimating $g_\Delta$, this indicates that the complexity of inference for a Lévy process observed at high frequency is essentially driven by the small jumps.

\subsection{Adaptation procedure\label{sec:adapt}}

We propose an adaptive procedure to select $m$ for  the estimator $\hat{g}_{\Delta,m}$ defined in \eqref{eq:estFg} that enables to attain the bound of Theorem \ref{thm1}. This procedure is a penalization procedure inspired by the one proposed in \cite{comte2010nonparametric}. Note that it can be straightforwardly adapted to select $m $ for  the estimator $\tilde{g}_{\Delta,m}$ defined in \eqref{eq:estFg0}.

 Consider the space $S_{m}=\{t\in L^{2}(\R),\ \mbox{supp}(\mathcal{F}(t))\subset [-m,m]\}.$ This space is generated by an orthonormal basis defined by
\begin{align}\label{eq:basis}\psi_{m,j}(x)=\sqrt{\pi m}\psi(mx-j),\quad j\in\Z\quad \psi(x)=\frac{\sin(x)}{\pi x}.\end{align}
 Indeed $\mathcal{F}\psi_{m,j}(u)=\sqrt{\pi}\frac{e^{iu j/m}}{\sqrt{m}}\mathbf{1}_{[-m,m]}(u)$ and it holds using Plancherel $$\langle\psi_{m,j},\psi_{m,k}\rangle=\frac{1}{2\pi}\langle\mathcal{F}\psi_{m,j},\mathcal{F}\psi_{m,k}\rangle=\frac{1}{2 m}\int_{-m}^{m}e^{\frac{iu}m(j-k)}d u=\delta_{jk}.$$ Therefore, we have the following decomposition $$\hat g_{\Delta,m}=\sum_{j\in\Z}\hat a_{m,j}\psi_{m,j},\quad \hat a_{m,j}=\langle \hat g_{\Delta,m},\psi_{m,j}\rangle=\frac{\sqrt{\pi}}{2\sqrt{m}}\int_{-m}^{m} \hat \phi_{Z_{\Delta}}(u)e^{-\frac{iuj}{m}} du.$$
 Using either Plancherel or this series representation, we get
 $$\|\hat g_{\Delta,m}\|^{2}=\frac{1}{2\pi}\int_{-m}^{m}|\hat \phi_{Z_{\Delta}}(u)|^{2}du=\sum_{j\in \Z}|\hat a_{m,j}|^{2}.$$
 
The adaptive procedure is built using penalization techniques. We define the contrast for $t\in S_{m}$, 
$$\gamma_{n}(t)=\| t\|^{2}-2\langle \hat g_{\Delta,m}, t\rangle=\| t\|^{2}-\frac1\pi\int \hat\phi_{Z_{\Delta}}(u)\mathcal Ft(-u)du$$ for which we easily check that $\hat g_{\Delta,m}=\mbox{arg}\min_{t\in S_{m}}\gamma_{n}(t)$ and $\gamma_{n}( \hat g_{\Delta,m})=-\|\hat g_{\Delta,m}\|^{2}$. Con\-si\-de\-ring a collection $(S_{m}, m=\lceil\tfrac{\pi}{2\eps}\rceil,\ldots,n)$  we select adaptively $m$ satisfying
\begin{align}\label{eq:adap}
\hat m=\mbox{arg}\min_{m\in \{1,\dots,n\}}\left(\gamma_{n}(\hat g_{\Delta,m})+\mbox{pen}(m)\right),\quad \mbox{with pen}(m)=\kappa e^{4\lambda\Delta}\frac mn.
\end{align}

\begin{theorem}\label{thm:adapt} Under the assumptions of Theorem \ref{thm1}, the adaptive estimator $\hat g_{\Delta, \hat m}$ defined in \eqref{eq:estFg} with $\hat m$ defined in \eqref{eq:adap} for $\kappa>32/(3\pi)$ satisfies for a positive constant $C$
\begin{align*}
\E[\|\hat g_{\Delta,\hat m}-g_{\Delta}\|^{2}]&\le 3\inf_{m\in\{1,\ldots,n\}}\left(\E[\|\hat g_{\Delta,m}-g_{\Delta}\|^{2}]+{\rm pen}(m)\right)+ \frac{C}{n}.\end{align*}  
\end{theorem}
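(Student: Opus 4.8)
The plan is to follow the classical Barron--Birgé--Massart penalized model selection scheme, as adapted to deconvolution-type spectral estimators in \cite{comte2010nonparametric}. Write $g_{\Delta,m}$ for the $L^2$-projection of $g_\Delta$ onto $S_m$ (whose Fourier transform is $(\mathcal Fg_\Delta)\mathds 1_{[-m,m]}$), so that $\hat g_{\Delta,m}=\arg\min_{t\in S_m}\gamma_n(t)$ and, as noted, $\gamma_n(\hat g_{\Delta,m})=-\|\hat g_{\Delta,m}\|^2$. The first step is the standard deterministic comparison: for any fixed $m$, the definition of $\hat m$ gives $\gamma_n(\hat g_{\Delta,\hat m})+\mathrm{pen}(\hat m)\le \gamma_n(\hat g_{\Delta,m})+\mathrm{pen}(m)$. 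Expanding $\gamma_n(t)=\|t-g_\Delta\|^2-\|g_\Delta\|^2-2\langle \hat g_{\Delta,M}-g_{\Delta,M},t\rangle$ on the relevant $S_M$ (using that $\langle \hat g_{\Delta,M},t\rangle=\langle \hat g_{\Delta,M'},t\rangle$ for $t\in S_{M\wedge M'}$ via the Fourier support), one arrives after rearrangement at an inequality of the form
\begin{align*}
\|\hat g_{\Delta,\hat m}-g_\Delta\|^2\le \|\hat g_{\Delta,m}-g_\Delta\|^2+2\nu_n(\hat g_{\Delta,\hat m}-g_{\Delta,m\vee\hat m})+\mathrm{pen}(m)-\mathrm{pen}(\hat m),
\end{align*}
where $\nu_n(t):=\langle \hat g_{\Delta,M}-g_{\Delta,M},t\rangle=\frac1{2\pi}\int (\hat\phi_{Z_\Delta}(u)-\phi_{Z_\Delta}(u))\overline{\mathcal Ft(u)}\,du$ is the centered empirical process driving the fluctuations; note $\E[\nu_n(t)]=0$ for deterministic $t\in S_M$.

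The second step is to control $\nu_n$ uniformly over the unit ball of $S_{m\vee\hat m}$. Using $2\nu_n(t)\le \tfrac14\|t\|^2+4\sup_{s\in S_{m'},\|s\|=1}\nu_n(s)^2$ with $m'=m\vee\hat m$, one reduces to bounding $\E\big[\big(\sup_{s\in B_{m'}}\nu_n(s)^2-p(m')\big)_+\big]$ for a suitable $p(m')$ absorbed into the penalty. By the isometry $\|s\|^2=\tfrac1{2\pi}\int_{-m'}^{m'}|\mathcal Fs|^2$, one has $\sup_{\|s\|=1,s\in S_{m'}}\nu_n(s)^2=\tfrac1{2\pi}\int_{-m'}^{m'}|\hat\phi_{Z_\Delta}(u)-\phi_{Z_\Delta}(u)|^2du$, whose expectation is $\le \tfrac{e^{4\lambda\Delta}}\pi\tfrac{m'}{n}$ by exactly the variance computation in the proof of Theorem \ref{thm1} (here $|\phi_{X^B_\Delta}(u)|^{-2}\le e^{4\lambda\Delta}$ is the key bound \eqref{eq:minor}). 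The main obstacle is upgrading this expectation bound to a deviation bound: because $\hat m$ is random one needs a concentration inequality valid simultaneously over the whole collection $m'\in\{\lceil\pi/(2\eps)\rceil,\dots,n\}$. For each fixed $m'$ write $\nu_n(s)=\frac1n\sum_{j=1}^n\big(\xi_j(s)-\E\xi_j(s)\big)$ with $\xi_j(s)=\frac1{2\pi}\int_{-m'}^{m'}\frac{e^{iu(X_{j\Delta}-X_{(j-1)\Delta})}}{\phi_{X^B_\Delta}(u)}\overline{\mathcal Fs(u)}\,du$; these are i.i.d., centered, and bounded since $|\xi_j(s)|\le \frac{e^{2\lambda\Delta}}{2\pi}\int_{-m'}^{m'}|\mathcal Fs(u)|du\le \frac{e^{2\lambda\Delta}}{2\pi}\sqrt{2m'}\,\|\mathcal Fs\|_2\cdot\sqrt{2\pi}$, i.e. $\|\xi_j\|_\infty\lesssim e^{2\lambda\Delta}\sqrt{m'}$ on $B_{m'}$, with variance proxy $v\lesssim e^{4\lambda\Delta}/n$. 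Applying the Talagrand/Klein--Rio inequality for the supremum of an empirical process over $B_{m'}$ (with $H^2=\E[\sup\nu_n^2]\lesssim e^{4\lambda\Delta}m'/n$, $v$, and $b\lesssim e^{2\lambda\Delta}\sqrt{m'}/n$ as in Lemma~6 of \cite{comte2010nonparametric}) yields, for a suitable numerical constant and $\mathrm{pen}(m')=\kappa e^{4\lambda\Delta}m'/n$ with $\kappa>32/(3\pi)$,
\begin{align*}
\E\Big[\big(\sup_{s\in B_{m'}}\nu_n(s)^2-\tfrac14\,\tfrac{\mathrm{pen}(m')}{\,}\big)_+\Big]\le \frac{C_1 e^{4\lambda\Delta}}{n}\,e^{-C_2 m'}+\frac{C_3 e^{4\lambda\Delta}}{n^2}.
\end{align*}
Summing this over $m'\in\{\lceil\pi/(2\eps)\rceil,\dots,n\}$ the geometric series converges and gives a total contribution $O(1/n)$ after noting $\lambda\Delta=O(1)$ so $e^{4\lambda\Delta}=O(1)$.

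The third step is bookkeeping. Taking expectations in the deterministic inequality, splitting $2\nu_n(\hat g_{\Delta,\hat m}-g_{\Delta,m\vee\hat m})\le \tfrac14\|\hat g_{\Delta,\hat m}-g_{\Delta,m\vee\hat m}\|^2+4\sup_{B_{m\vee\hat m}}\nu_n^2$, bounding $\|\hat g_{\Delta,\hat m}-g_{\Delta,m\vee\hat m}\|^2\le 2\|\hat g_{\Delta,\hat m}-g_\Delta\|^2+2\|g_\Delta-g_{\Delta,m\vee\hat m}\|^2\le 2\|\hat g_{\Delta,\hat m}-g_\Delta\|^2+2\|g_\Delta-g_{\Delta,m}\|^2$ (the bias is decreasing in the cutoff), and using the concentration bound to absorb $4\sup\nu_n^2$ into $\tfrac12\mathrm{pen}(m\vee\hat m)\le\tfrac12(\mathrm{pen}(m)+\mathrm{pen}(\hat m))$ up to the $O(1/n)$ remainder, one collects terms. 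The $\tfrac12\|\hat g_{\Delta,\hat m}-g_\Delta\|^2$ on the right is moved to the left; after multiplying through by $2$ and using $\E[\|\hat g_{\Delta,m}-g_\Delta\|^2]=\|g_\Delta-g_{\Delta,m}\|^2+\E[\|\hat g_{\Delta,m}-g_{\Delta,m}\|^2]$ together with $\E[\|\hat g_{\Delta,m}-g_{\Delta,m}\|^2]\le \tfrac{e^{4\lambda\Delta}}\pi\tfrac mn\le\mathrm{pen}(m)$ (valid for $\kappa\ge1/\pi$, weaker than $\kappa>32/(3\pi)$), one arrives at
\begin{align*}
\E[\|\hat g_{\Delta,\hat m}-g_\Delta\|^2]\le 3\big(\|g_\Delta-g_{\Delta,m}\|^2+\mathrm{pen}(m)\big)+\frac Cn\le 3\big(\E[\|\hat g_{\Delta,m}-g_\Delta\|^2]+\mathrm{pen}(m)\big)+\frac Cn,
\end{align*}
and taking the infimum over $m\in\{1,\dots,n\}$ (equivalently over the admissible range $m\ge\lceil\pi/(2\eps)\rceil$, extending trivially) completes the proof. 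I expect the concentration step — obtaining the correct numerical constant $32/(3\pi)$ in front of the penalty and checking the three Talagrand parameters $(H,v,b)$ for this specific complex-valued integral functional — to be the delicate point; everything else is the routine model-selection algebra already present in \cite{comte2010nonparametric}.
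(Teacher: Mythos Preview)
Your plan is exactly the paper's: the penalized-contrast scheme of \cite{comte2010nonparametric} together with Talagrand's inequality (stated as Lemma~\ref{lem:T} in the Appendix), and the overall structure is correct. Two points of your sketch need correction, both in the places you yourself flag as delicate. First, the Talagrand variance parameter: your ``$v\lesssim e^{4\lambda\Delta}/n$'' is not right. The paper computes $v^2=\sup_{\|t\|=1,\,t\in S_{m'}}\mathrm{Var}(f_t(X_\Delta))$ by expanding $t$ in the sinc basis $(\psi_{m',j})_j$ and applying Cauchy--Schwarz to the resulting double integral, which brings in $\|f_{X_\Delta}\|$ and yields $v^2\propto e^{4\lambda\Delta}\sqrt{m'}\,\|f_{X_\Delta}\|$; consequently the first Talagrand remainder is of order $\tfrac{\sqrt{m'}}{n}e^{-C_2\sqrt{m'}}$, not $\tfrac{1}{n}e^{-C_2 m'}$ as you write. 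Both are summable over $m'\in\{1,\dots,n\}$, so your $O(1/n)$ conclusion survives, but the computation of $v$ is the only genuinely nontrivial step in the whole proof. Second, your symmetric absorption $4p(m\vee\hat m)\le\tfrac12(\mathrm{pen}(m)+\mathrm{pen}(\hat m))$ would force $\kappa\ge 32/\pi$, three times larger than claimed; the paper instead uses the asymmetric split $4p(m,\hat m)\le \mathrm{pen}(\hat m)+\tfrac12\,\mathrm{pen}(m)$, whose worst case $m=\hat m$ gives exactly the stated threshold $\kappa>32/(3\pi)$. (A minor slip: the argument of $\nu_n$ in the basic inequality obtained directly from the contrast comparison is $\hat g_{\Delta,\hat m}-\hat g_{\Delta,m}$, not $\hat g_{\Delta,\hat m}-g_{\Delta,m\vee\hat m}$.)
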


Theorem \ref{thm:adapt} ensures that under \eqref{Ass:p} the adaptive estimator $\hat g_{\Delta,\hat m}$ attains the optimal rate $(\log n)^{1/\alpha}/(n\Delta^{1/\alpha})$ of convergence.

\subsection{Estimation in presence of a Brownian component}

A natural question is whether the above results hold true  for general L\'evy processes, that is in presence of a Gaussian part. Let $\sigma >0$, the L\'evy-It\^o decomposition of a L\'evy process $X$ of L\'evy triplet $(\gamma_{\nu},\sigma^{2},\nu)$ allows to write $X$ as the sum of four independent L\'evy processes: 
\begin{align}\label{eq:Xito2}
 X_t&=: tb_{\nu}+\sigma W_{t}+X^{S}_t+X^{B}_t,\quad \forall t\geq 0,
\end{align}
where $ W$ is a standard Brownian motion. The convolution structure of the model is preserved and the latter strategy can be adapted assuming that $\sigma$ and the density of $X_{\Delta}^{B}$ are known. Nonetheless, we expect deteriorated rates of convergence as we face a deconvolution problem with a Gaussian error (see \cite{lacour2006rates} and \cite{MR2354572,MR2742504}).  For $\Delta>0$, we have the representation
$X_\Delta= Z_\Delta + X_\Delta^B + \sigma W_\Delta,$ and we now estimate the characteristic function of the small jumps by (see \eqref{eq:phiZ})
\begin{align}\label{form:estimator_gDelta_with_Brw}
\widecheck{\phi}_{Z_\Delta}(u) = \frac{1}{n} \sum_{j=1}^{n} \frac{e^{iu(X_{j\Delta}-X_{(j-1)\Delta})}}{\phi_{X_{\Delta}^B}(u) \phi_{\sigma W_\Delta}(u)},
\end{align}
where $\phi_{\sigma W_\Delta}(u)= e^{-\sigma^2\Delta \frac{u^2}{2}}.$ Using a Fourier inversion and a cut-off, we derive the following estimator
\begin{align}\label{eq:estFgB}
\check{g}_{\Delta,m}(x) = \frac{1}{2\pi} \int_{-m}^m \check{\phi}_{Z_\Delta}(u) e^{-iux}dx.
\end{align}
Theorem \ref{thm:MB} below provides an upper bound for the integrated $ L^2$ risk of $\check{g}_{\Delta,m}$. 

\begin{theorem}\label{thm:MB}
Let $X$ be a Lévy process with triplet $(\gamma_\nu,\sigma^2,\nu)$ and with a Lévy measure $\nu$ that satisfies \eqref{Ass:p}, for some $M>0$ and $\alpha\in(0,2)$. Let  $\Delta>0$, $\eps\in(0,1]$, $g_\Delta$ be the density of $\Delta b_\nu+X_\Delta^S$ and $\check g_{\Delta,m}$ the estimator defined in \eqref{eq:estFgB}. Then, for all $m\geq \pi/(2\eps)$ it holds that
$$\E[\|\check{g}_{\Delta,m}-g_{\Delta}\|^2]\leq   \|g_{\Delta,m}-g_{\Delta}\|^2+\frac{e^{4\lambda\Delta}}{\pi} \frac {\int_{0}^{m\sqrt{\Delta}\sigma} e^{z^2}dz}{n\sqrt{\Delta}\sigma},$$ and $ \|g_{\Delta,m}-g_{\Delta}\|^2\le  C\Delta^{-\frac1\alpha}\Gamma\left(\tfrac{1}{\alpha},c\Delta m^{\alpha}\right)$ for  constants $c>0$ and $C>0$ depending on $\alpha$ and $M$ given in \eqref{eq:biasF}.
\end{theorem}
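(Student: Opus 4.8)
The plan is to run exactly the argument used for Theorem \ref{thm1}, the only new ingredient being the deterministic Gaussian factor $\phi_{\sigma W_\Delta}(u)=e^{-\sigma^2\Delta u^2/2}$ that now sits in the denominator of $\check\phi_{Z_\Delta}$. First I would record that, by the independence of the four components in the Lévy-Itô decomposition \eqref{eq:Xito2}, the characteristic function factorizes as $\phi_{X_\Delta}=\phi_{Z_\Delta}\,\phi_{X_\Delta^B}\,\phi_{\sigma W_\Delta}$, with both $\phi_{X_\Delta^B}$ and $\phi_{\sigma W_\Delta}$ known and nowhere vanishing; hence $\check\phi_{Z_\Delta}$ in \eqref{form:estimator_gDelta_with_Brw} is well defined and unbiased for $\phi_{Z_\Delta}$. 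Since $Z_\Delta=\Delta b_\nu+X_\Delta^S$ is unchanged, Lemma \ref{lemmapicard} still guarantees $g_\Delta\in L^1(\R)\cap L^2(\R)$, so Plancherel's identity applies and gives the usual bias-variance split
$$\E[\|\check g_{\Delta,m}-g_\Delta\|^2]=\|g_{\Delta,m}-g_\Delta\|^2+\frac{1}{2\pi}\int_{-m}^{m}\E\big[|\check\phi_{Z_\Delta}(u)-\phi_{Z_\Delta}(u)|^2\big]\,du.$$

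For the bias term nothing needs to be redone: $g_\Delta$ is literally the density from Theorem \ref{thm1}, so Lemma \ref{lemmapicard}, the constraint $m\ge\pi/(2\eps)$ and \eqref{Ass:p} reproduce verbatim the bound \eqref{eq:biasF}, i.e. $\|g_{\Delta,m}-g_\Delta\|^2\le C\Delta^{-1/\alpha}\Gamma(1/\alpha,c\Delta m^\alpha)$. For the variance term I would use the i.i.d. structure of the increments together with the factorization above to write
$$\E\big[|\check\phi_{Z_\Delta}(u)-\phi_{Z_\Delta}(u)|^2\big]=\frac{1}{|\phi_{X_\Delta^B}(u)|^2\,|\phi_{\sigma W_\Delta}(u)|^2}\,\V\!\left(\frac1n\sum_{j=1}^n e^{iu(X_{j\Delta}-X_{(j-1)\Delta})}\right)=\frac{1-|\phi_{X_\Delta}(u)|^2}{|\phi_{X_\Delta^B}(u)|^2\,|\phi_{\sigma W_\Delta}(u)|^2}\,\frac1n,$$
and then bound $1-|\phi_{X_\Delta}(u)|^2\le 1$, $|\phi_{X_\Delta^B}(u)|^{-2}\le e^{4\lambda\Delta}$ by \eqref{eq:minor}, and $|\phi_{\sigma W_\Delta}(u)|^{-2}=e^{\sigma^2\Delta u^2}$. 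Integrating over $[-m,m]$ gives
$$\frac{1}{2\pi}\int_{-m}^{m}\E\big[|\check\phi_{Z_\Delta}(u)-\phi_{Z_\Delta}(u)|^2\big]\,du\le\frac{e^{4\lambda\Delta}}{\pi n}\int_0^m e^{\sigma^2\Delta u^2}\,du,$$
and the substitution $z=\sqrt{\Delta}\,\sigma\,u$ turns the last integral into $(\sqrt{\Delta}\,\sigma)^{-1}\int_0^{m\sqrt{\Delta}\,\sigma}e^{z^2}\,dz$, which is precisely the stated variance bound. Adding the two contributions completes the proof.

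There is no real obstacle here: the proof is that of Theorem \ref{thm1} with the extra deterministic weight $e^{\sigma^2\Delta u^2}$ carried through the variance integral. The only points deserving a line of care are checking the factorization $\phi_{X_\Delta}=\phi_{Z_\Delta}\phi_{X_\Delta^B}\phi_{\sigma W_\Delta}$ and the legitimacy of the Fourier inversion in \eqref{eq:estFgB} (again Lemma \ref{lemmapicard}, unaffected by the Brownian part), and tracking constants carefully through the change of variables so that the variance term comes out in the announced form $\frac{e^{4\lambda\Delta}}{\pi}\big(\int_0^{m\sqrt{\Delta}\,\sigma}e^{z^2}\,dz\big)/(n\sqrt{\Delta}\,\sigma)$. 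I would leave the observation that this variance now grows super-polynomially in $m$ — and therefore degrades the bias-variance trade-off to a logarithmic-type rate — to the discussion following the theorem rather than to the proof.
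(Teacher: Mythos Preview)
Your proposal is correct and follows essentially the same approach as the paper: reuse the bias--variance decomposition of Theorem \ref{thm1}, leave the bias bound \eqref{eq:biasF} unchanged, and in the variance term carry the extra deterministic factor $|\phi_{\sigma W_\Delta}(u)|^{-2}=e^{\sigma^2\Delta u^2}$ through the integral before the change of variable $z=\sqrt{\Delta}\,\sigma u$. The paper's proof is in fact just a three-line sketch of exactly these steps.
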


Deriving convergence rates in this framework is intricate, firstly because no closed-form formula of the optimal choice of $m^{\star}$ is available (see \cite{lacour2006rates}), and secondly because we are interested in several asymptotic depending in the behaviour of $(n,\Delta,\sigma)$. For the sake of simplicity, we  only consider the case  $\alpha=1$, for which explicit computations can be carried out, and specific asymptotic for $(\Delta,\sigma)$ that allows to  recover known rates. 

\begin{corollary}\label{cor:rateB}
Under the assumptions of Theorem \ref{thm:MB}, assuming that $\alpha=1$, it holds that 
$$\E[\|\check{g}_{\Delta,m^{\star}}-g_{\Delta}\|^2]\leq  \begin{cases}
K e^{-\kappa \sqrt{\log n}}& \mbox{ if } \Delta \mbox{ and } \sigma \mbox{ are fixed},\\
 K'\frac{\log n}{n\Delta}&\mbox{ if } \sigma^{2}\Delta^{-1} (\log n)^{2}\le 1,
\end{cases} $$  for positive constants $K$ and $\kappa$ depending on $\Delta,\sigma, M, \lambda$, and for a universal positive constant $K'$.
\end{corollary}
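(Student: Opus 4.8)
The plan is to plug a hand-picked cut-off $m^\star$ into the upper bound of Theorem~\ref{thm:MB}, handling the two asymptotic regimes separately. First I would specialise that bound to $\alpha=1$: since $\Gamma(1,s)=e^{-s}$, the bias estimate \eqref{eq:biasF} becomes $\|g_{\Delta,m}-g_\Delta\|^2\le\frac{1}{4M\Delta}e^{-\frac{4M\Delta}{\pi}m}$, so that for every $m\ge\pi/(2\eps)$,
$$\E[\|\check{g}_{\Delta,m}-g_\Delta\|^2]\le\frac{1}{4M\Delta}e^{-\frac{4M\Delta}{\pi}m}+\frac{e^{4\lambda\Delta}}{\pi}\,\frac{\int_0^{m\sqrt{\Delta}\,\sigma}e^{z^2}\,dz}{n\sqrt{\Delta}\,\sigma}.$$
The only analytic fact I need about the (new) Gaussian-deconvolution variance term is the elementary bound $\int_0^x e^{z^2}\,dz\le x\,e^{x^2}$, valid for all $x\ge0$: this factor is essentially linear in $x$ while $x$ stays bounded but explodes like $e^{x^2}$ as $x\to\infty$, and it is exactly this dichotomy that separates the two regimes.

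\textbf{$\Delta$ and $\sigma$ fixed.} This is a deconvolution problem with a super-smooth (Gaussian) error, $|\phi_{\sigma W_\Delta}(u)|^2=e^{-\sigma^2\Delta u^2}$, against a target whose characteristic function decays only like $e^{-c\Delta|u|}$ under \eqref{Ass:p} (Lemma~\ref{lemmapicard} with $\alpha=1$); as in such problems only a $\sqrt{\log n}$-size cut-off is affordable and the rate has the form $\exp(-\kappa(\log n)^{1/2})$, the exponent $1/2=\alpha/2$ being the ratio of the two tail indices ($|u|^\alpha$ for the signal versus $|u|^2$ for the noise). Concretely I would set $m^\star=\frac{\sqrt{\log n}}{2\sigma\sqrt{\Delta}}$ (any fixed fraction of $\frac{\sqrt{\log n}}{\sigma\sqrt{\Delta}}$ does the job), which for $n$ large enough exceeds $\pi/(2\eps)$, so Theorem~\ref{thm:MB} applies. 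Then $m^\star\sqrt{\Delta}\,\sigma=\tfrac12\sqrt{\log n}$, hence the variance term is at most $\frac{e^{4\lambda\Delta}}{2\pi\sqrt{\Delta}\,\sigma}\sqrt{\log n}\;n^{-3/4}$, which decays polynomially and is therefore $o\!\big(e^{-\kappa\sqrt{\log n}}\big)$ for every $\kappa>0$, whereas the bias equals $\frac{1}{4M\Delta}\exp\!\big(-\tfrac{2M\sqrt{\Delta}}{\pi\sigma}\sqrt{\log n}\big)$. Choosing $\kappa=\tfrac{2M\sqrt{\Delta}}{\pi\sigma}$ and $K$ large enough — depending on $\Delta,\sigma,M,\lambda$ and absorbing the polynomial remainder together with the finitely many small values of $n$ — yields $\E[\|\check{g}_{\Delta,m^\star}-g_\Delta\|^2]\le K e^{-\kappa\sqrt{\log n}}$.

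\textbf{Regime $\sigma^2\Delta^{-1}(\log n)^2\le1$.} Now I would retain the cut-off that is optimal in the Brownian-free case, $m^\star=\frac{\pi\log n}{4M\Delta}$ (of the same order as the $m^\star$ of Remark~\ref{rmq1} for $\alpha=1$; it exceeds $\pi/(2\eps)$ once $n$ is large, which this regime ensures as it forces $\Delta$ or $\sigma$ to be small). This choice makes the bias exactly $\frac{1}{4M\Delta}e^{-\log n}=\frac{1}{4M\Delta n}$, and the regime hypothesis is precisely what keeps $m^\star\sqrt{\Delta}\,\sigma=\frac{\pi}{4M}\cdot\frac{\sigma\log n}{\sqrt{\Delta}}\le\frac{\pi}{4M}$ bounded; by the elementary inequality above the variance term is then at most $\frac{e^{4\lambda\Delta}\,e^{\pi^2/(16M^2)}}{\pi}\cdot\frac{m^\star}{n}$, i.e. of exactly the shape $\asymp\frac{\log n}{n\Delta}$ found in the Brownian-free analysis of Remark~\ref{rmq1}. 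Summing the two contributions gives $\E[\|\check{g}_{\Delta,m^\star}-g_\Delta\|^2]\le K'\frac{\log n}{n\Delta}$, the constant $K'$ being numerical since the prefactor $e^{4\lambda\Delta}e^{\pi^2/(16M^2)}$ stays bounded throughout this regime.

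\textbf{Where the difficulty lies.} No individual computation above is hard; the genuinely delicate point — already flagged before the statement — is that, contrary to Remark~\ref{rmq1}, there is no closed-form minimiser of the risk bound once the factor $\int_0^{m\sqrt{\Delta}\sigma}e^{z^2}dz$ is present, so one must guess the correct order of $m^\star$ in each regime and then verify that this extra term does not degrade the rate: that it is negligible against the slow bias $e^{-\kappa\sqrt{\log n}}$ in the first case, and that the hypothesis $\sigma^2\Delta^{-1}(\log n)^2\le1$ confines it to its linear order $m^\star/n$ in the second.
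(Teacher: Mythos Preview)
Your proposal is correct and reaches the same conclusions, but it takes a genuinely simpler route than the paper. The paper actually differentiates the risk bound, solves the first-order condition $e^{\sigma^2\Delta m^2+c\Delta m}=c_\lambda n$ for the exact minimiser $m^\star=\frac{2\Delta^{-1}\log(c_\lambda n)}{c+\sqrt{c^2+4\sigma^2\Delta^{-1}\log(c_\lambda n)}}$, and then carries out an asymptotic analysis of this root in each regime (using $\int_0^x e^{y^2}dy\sim\tfrac1{2x}e^{x^2}$ as $x\to\infty$, a Taylor expansion of $c\Delta m^\star$ in powers of $R=\sigma\Delta^{-1/2}\sqrt{\log(c_\lambda n)}$, and a separate sub-case distinction in the second regime). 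You instead bypass all of this by \emph{guessing} a cut-off of the right order in each case and verifying via the one-line inequality $\int_0^x e^{z^2}dz\le x e^{x^2}$ that the bound has the announced shape. What the paper's approach buys is the true minimiser of the upper bound and a precise picture of how it interpolates between the two regimes; what yours buys is brevity and the avoidance of asymptotic equivalences --- your argument gives a finite-sample inequality for every $n$ with $m^\star\ge\pi/(2\eps)$, not just an asymptotic rate.

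One small caveat: your final sentence calls $K'$ ``numerical'', but your own prefactor $e^{4\lambda\Delta}e^{\pi^2/(16M^2)}/(4M)$ depends on $M$ and on $\lambda\Delta$; nothing in the hypothesis $\sigma^2\Delta^{-1}(\log n)^2\le1$ bounds these. (The paper's proof has the same dependence, so its claim that $K'$ is ``universal'' is equally loose.) It would be more accurate to say that $K'$ depends on $M$ and on an upper bound for $\lambda\Delta$.
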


The first rate in Corollary \ref{cor:rateB} corresponds to the classical deconvolution rate in a Gaussian framework derived e.g. in \cite{lacour2006rates} and \cite{MR2354572,MR2742504}.
 The second case corresponds to a setting where $\sigma$ goes to 0 rapidly enough so that the rate is not affected by the presence of a Brownian part, we recover the rate of Theorems \ref{thm1} and \ref{thm2}.

\section{Numerical examples}

\subsection{Setting}

We illustrate the numerical performances of the adaptive estimator  $\hat g_{\Delta, \hat m}$ defined in \eqref{eq:estFg} with $\hat m$ defined in \eqref{eq:adap}. We fix $\eps=1$ and consider  Lévy processes $X$ with Lévy density of the form
\begin{align}\label{form:TSP}
\frac{\nu(dx)}{dx} = \frac{P}{x^{1+\alpha}} e^{-Ax} \ind_{x>0} + \frac{Q}{|x|^{1+\alpha}} e^{-B|x|} \ind_{x<0},
\end{align}
where $P,Q,A,B$ are non-negative constants and $0<\alpha<2$. Note that the assumption \eqref{Ass:p} is met for $(M,\alpha)=\left( \frac{Pe^{-A} + Qe^{-B}}{2-\alpha} ,\alpha\right)$. The case $A=B=0$ corresponds to an $\alpha$-stable Lévy process, otherwise the process $X$ is a tempered stable Lévy process. 
To simulate the increments of an $\alpha$-stable process we use the self-similarity property, \textit{i.e.} $X_\Delta \sim \Delta^{1/\alpha} X_1$ where $X_{1}$ is distributed as an $\alpha$-stable random variable (see Masuda \cite{belomestny2015levy} or \cite{samorodnitsky_stable_1994}).
Tempered stable Lévy processes do not exhibit self-similarity, to simulate such processes we use a compound Poisson approximation approach described in Example 6.9 of \cite{tankov} (see also Section 4.5 therein).

It is not straightforward to evaluate the associated $L^2$ error of the estimator $\hat{g}_{\Delta,\hat{m}}$ since no closed-form formula for $g_\Delta$ is available, even when the Lévy density is known. However, for $\alpha$-stable and tempered stable Lévy processes, we can derive a useful expression of the characteristic function of $g_\Delta$ thanks to the Lévy-Khintchine formula
\begin{align}
        \phi_{Z_\Delta}(u) = \begin{cases}
            e^{\Delta \left[\int_{0}^1 \frac{\cos(ux)-1}{x^{1+\alpha}}\left(Pe^{-Ax} + Qe^{-Bx}\right)dx + i\int_{0}^1 \frac{\sin(ux)}{x^{1+\alpha}}\left(Pe^{-Ax} - Qe^{-Bx}\right)dx\right] }, & \alpha<1,\\
             e^{\Delta\left[\int_{0}^1 \frac{\cos(ux)-1}{x^{1+\alpha}}\left(Pe^{-Ax} + Qe^{-Bx}\right)dx + i \int_{0}^1 \frac{\sin(ux)-ux}{x^{1+\alpha}}\left(Pe^{-Ax} - Qe^{-Bx}\right)dx\right] }, & \alpha \geq 1.
        \end{cases}
    \end{align}
By numerical Fourier inversion we approximate $g_\Delta(x)$ by $g_{\Delta,\ell}(x):=\int_{-\ell}^{\ell} \phi_{Z_\Delta}(u)e^{-iux}du$ which is used as a benchmark to compute the $L^2$ loss. In practice we select $\ell$ large enough such that $g_{\Delta,\ell}$ does not change, i.e. for $\eta$ small enough $|g_{\Delta,\ell}(x)-g_{\Delta,\ell'}(x)|\le \eta$ for all $x$ and  $\ell'\ge \ell$. After preliminary simulation experiments, we select $\ell=1000$ when $\Delta<1$ and $\ell=100$ when $\Delta=1$ in the $\alpha-$stable case and $\ell=50$ when $\Delta<1$ and $\ell=10$ when $\Delta=1$ in the tempered stable case. To help the comparison between the different examples, where $\|g_\Delta\|^2$ may vary a lot, we compute the relative $L^2$ error defined as $\frac{\|{\hat{g}_{\Delta, \hat{m}} - g_\Delta}\|^2}{\|{g_\Delta}\|^2}$.  The calibration of the constant $\kappa$ in the penalty term is also done by preliminary simulation experiments. This constant is selected as $\kappa=0.9$. We compute a Monte Carlo estimate with $100$ values of the $L^2$ risk for the different examples.

Hereafter, we illustrate our procedure both visually and by comparing their risk for different examples, both stable and tempered, and different values of  $n \in \{500,1000,10000\}$, $\Delta \in \{0.01,0.1,1\}$ and $\alpha \in \{0.7,1.1,1.7\}$.

\subsection{Results and comments} 

\begin{figure}[h!]
    \centering   
   \includegraphics[scale=0.3]{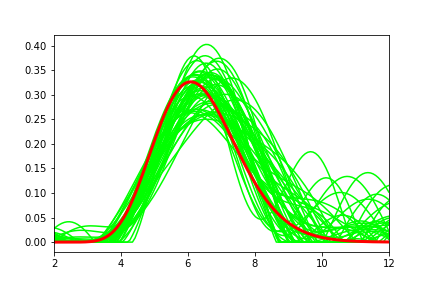}\includegraphics[scale=0.3]{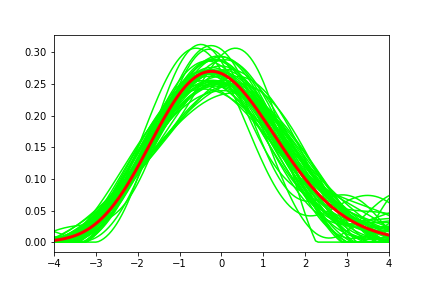}\includegraphics[scale=0.3]{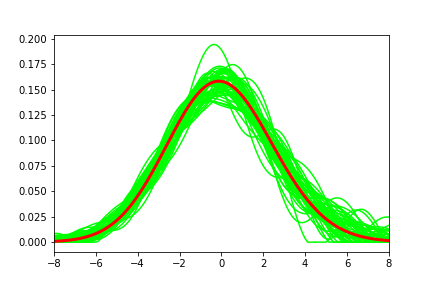}\\
    \includegraphics[scale=0.3]{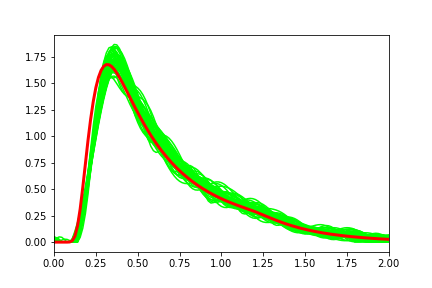}\includegraphics[scale=0.3]{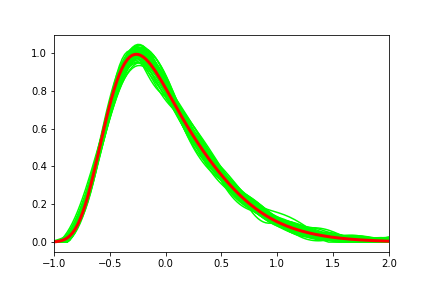}\includegraphics[scale=0.3]{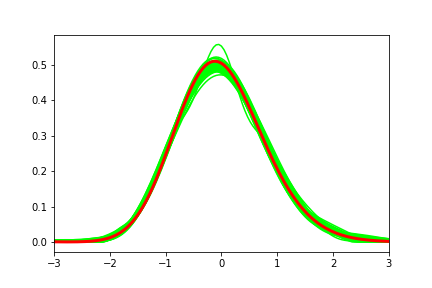}\\
    \includegraphics[scale=0.3]{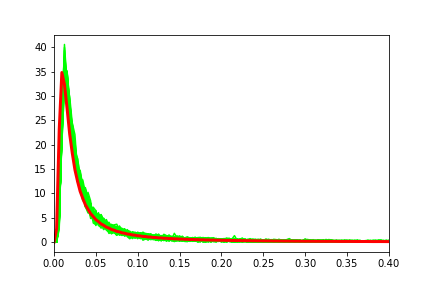}\includegraphics[scale=0.3]{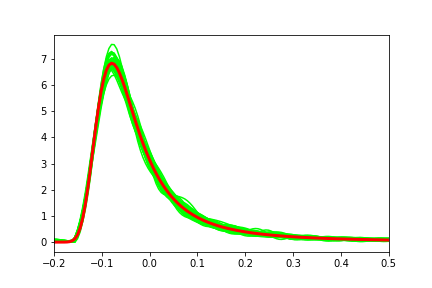}\includegraphics[scale=0.3]{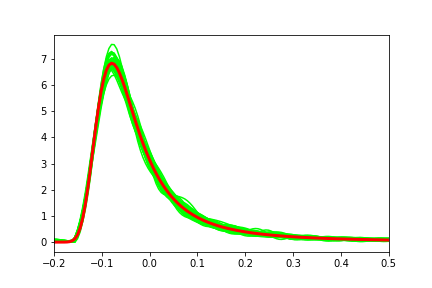}
\caption{Plot of $50$ realisations of $\hat{g}_{\Delta,\hat{m}}$ (green) and an approximation of $g_\Delta$(red) for $\alpha \in \{0.7,1.1,1.7\}$ (columns), $\Delta=1$, (first line) $\Delta=0.1$ (second line), $\Delta=0.01$ (third line), $n=3000$ and $P=2, Q=0, A=0$.}
\label{figure:Stable_NS}
\end{figure}

\begin{figure}[h!]
    \centering
    \includegraphics[scale=0.3]{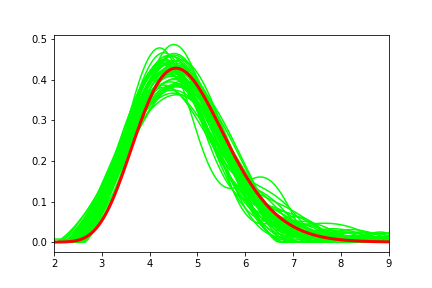}
    \includegraphics[scale=0.3]{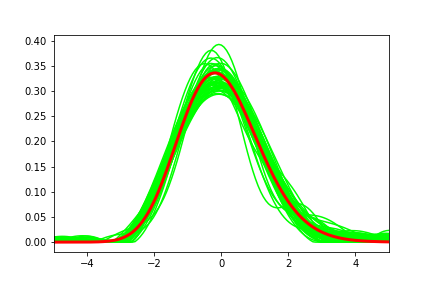}\\
   \includegraphics[scale=0.3]{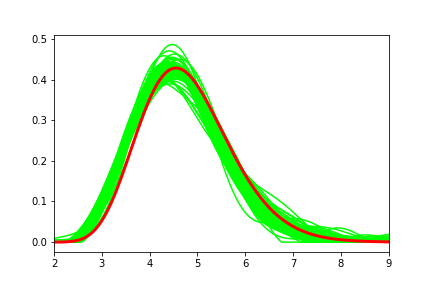}
   \includegraphics[scale=0.3]{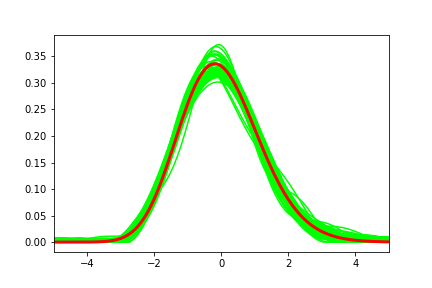}
    \caption{Plot of $50$ realisations of $\hat{g}_{\Delta,\hat{m}}$ (green) and an approximation of $g_\Delta$(red) in the tempered stable case for $\alpha \in \{0.7,1.1\}$, $\Delta=1$, $n=500,1000$ and $P=2, Q=0, A=1$.}
    \label{fig:Tempered_Stable}
\end{figure}

\begin{table}[h!]
\begin{tabular}{cc|cc||cc||cc}
\multicolumn{2}{c}{ } & \multicolumn{2}{c||}{$\Delta=1$} & \multicolumn{2}{c||}{$\Delta=0.1$} & \multicolumn{2}{c}{$\Delta=0.01$} \\
\hline
$\alpha$ & $n$ &  $\tfrac{\nrm{\hat{g}_{\Delta,\hat{m}} - g_\Delta}^2}{\nrm{g_\Delta}^2}$   & $\overline{\hat{m}}$   &$\tfrac{\nrm{\hat{g}_{\Delta,\hat{m}} - g_\Delta}^2}{\nrm{g_\Delta}^2}$   & $\overline{\hat{m}}$   &$\tfrac{\nrm{\hat{g}_{\Delta,\hat{m}} - g_\Delta}^2}{\nrm{g_\Delta}^2}$   & $\overline{\hat{m}}$   \\
\hline
\multirow{6}{*}{0.7}  & \multirow{2}{*}{500} & $4.21 \times 10^{-1}$& 1.57 & $2.30 \times 10^{-2}$& 16.92&$2.65 \times 10^{-2}$ &428.14\\ 
& & {\small{(0.36)}} &  {\small{(0.02)}}& {\small{(0.01)}} & {\small{(3.19)}} & {\small{(0.01)}} & {\small{(87.22)}}\\ \cmidrule{3-8}
& \multirow{2}{*}{1000} & $1.90 \times 10^{-1}$ &1.57 &$1.50 \times 10^{-2}$ & 19.30&$1.41 \times 10^{-2}$ & 524.73\\ 
& & {\small{ (0.15)}} &  {\small{(0.02)}}& {\small{(0.01)}} & {\small{(3.22)}} & {\small{($0.49 \times 10^{-2}$)}} & {\small{(80.92)}}\\ \cmidrule{3-8}
& \multirow{2}{*}{10000} &$2.90 \times 10^{-2}$ &1.58 &$2.23 \times 10^{-3}$ &28.74 & $1.82 \times 10^{-3}$&821.53\\ 
& & {\small{(0.02)}} &  {\small{(0.03)}}& {\small{($0.75 \times 10^{-3}$)}} & {\small{(3.47)}} & {\small{($0.54 \times 10^{-3}$)}} & {\small{(89.02)}}\\ \hline
\hline
\multirow{6}{*}{1.1}  & \multirow{2}{*}{500} &$1.18 \times 10^{-1}$ & 1.60& $1.12 \times 10^{-2}$& 7.61& $1.24 \times 10^{-2}$&62.05\\ 
& & {\small{(0.13)}} &  {\small{(0.10)}}& {\small{(0.01)}} & {\small{(1.40)}} & {\small{($0.58 \times 10^{-2}$)}} & {\small{(11.73)}}\\ \cmidrule{3-8}
& \multirow{2}{*}{1000} & $5.74 \times 10^{-2}$&1.58 & $6.51 \times 10^{-3}$& 8.47& $7.35 \times 10^{-3}$&67.98\\ 
& & {\small{(0.06)}} &  {\small{(0.06)}}& {\small{($3.40 \times 10^{-3}$)}} & {\small{(1.53)}} & {\small{($0.35 \times 10^{-2}$)}} & {\small{(11.06)}}\\ \cmidrule{3-8}
& \multirow{2}{*}{10000} & $6.48 \times 10^{-3}$ & 1.58 &$7.50 \times 10^{-4}$ & 11.07&$7.62 \times 10^{-4}$ &91.09\\ 
& & {\small{(0.05)}} &  {\small{(0.05)}}& {\small{($0.30 \times 10^{-3}$)}} & {\small{(1.24)}} & {\small{($0.30 \times 10^{-3}$)}} & {\small{(8.76)}}\\ \hline
\hline
\multirow{6}{*}{1.7}  & \multirow{2}{*}{500} & $7.78 \times 10^{-2}$& 1.58& $7.19 \times 10^{-3}$& 3.04&$7.42 \times 10^{-3}$ &11.54\\ 
& & {\small{(0.06)}} &  {\small{(0.06)}}& {\small{($0.68 \times 10^{-3}$)}} & {\small{(0.67)}} & {\small{($5.80 \times 10^{-3}$)}} & {\small{(2.20)}}\\ \cmidrule{3-8}
& \multirow{2}{*}{1000} &$3.72 \times 10^{-2}$ &1.57 & $3.83 \times 10^{-3}$& 3.04& $4.48 \times 10^{-3}$&12.97\\ 
& & {\small{(0.03)}} &  {\small{(0.04)}}& {\small{($0.28 \times 10^{-3}$)}} & {\small{(0.24)}} & {\small{($2.80 \times 10^{-3}$)}} & {\small{(2.97)}}\\ \cmidrule{3-8}
& \multirow{2}{*}{10000} & $3.90 \times 10^{-3}$&1.57 & $8.6 \times 10^{-4}$& 3.73& $1.25 \times 10^{-3}$&14.89\\ 
& & {\small{(0.01)}} &  {\small{(0.04)}}& {\small{$(0.50 \times 10^{-3})$}} & {\small{(0.30)}} & {\small{$(0.60 \times 10^{-3})$}} & {\small{(2.08)}}\\ 
\end{tabular}
\caption{ Mean and standard deviation of the relative $L^2$ risk and selected cut-off $\hat{m}$ for $100$ estimators in the $\alpha$-stable symmetric case $P=1, Q=1, A=0$. }
\label{table:stable_Sym}
\end{table}

\begin{table}[h!]
\begin{tabular}{cccc||cc||cc}
\multicolumn{2}{c}{ } & \multicolumn{2}{c||}{$\Delta=1$} & \multicolumn{2}{c||}{$\Delta=0.1$} & \multicolumn{2}{c}{$\Delta=0.01$} \\
\hline
$\alpha$ & $n$ &  $\tfrac{\nrm{\hat{g}_{\Delta,\hat{m}} - g_\Delta}^2}{\nrm{g_\Delta}^2}$   & $\overline{\hat{m}}$   &$\tfrac{\nrm{\hat{g}_{\Delta,\hat{m}} - g_\Delta}^2}{\nrm{g_\Delta}^2}$   & $\overline{\hat{m}}$   &$\tfrac{\nrm{\hat{g}_{\Delta,\hat{m}} - g_\Delta}^2}{\nrm{g_\Delta}^2}$   & $\overline{\hat{m}}$   \\
\hline
\multirow{6}{*}{0.7}  & \multirow{2}{*}{500} &$3.48 \times 10^{-1}$& 1.58& $3.76 \times 10^{-2}$& 15.91 &$1.11 \times 10^{-1}$ &461.14\\ 
& & {\small{(0.27)}} &  {\small{ (0.03)}}& {\small{($1.50 \times 10^{-2}$)}} & {\small{(3.01)}} & {\small{($0.28 \times 10^{-1}$)}} & {\small{(76.86)}}\\ \cmidrule{3-8}
& \multirow{2}{*}{1000} &$1.62 \times 10^{-1}$ &1.58 &$2.67 \times 10^{-2}$ & 19.31 &$9.90 \times 10^{-2}$ & 527.81\\ 
& & {\small{(0.11)}} &  {\small{(0.03)}}& {\small{($0.80 \times 10^{-2}$)}} & {\small{(2.69)}} & {\small{($0.17 \times 10^{-1}$)}} & {\small{(81.93)}}\\ \cmidrule{3-8}
& \multirow{2}{*}{10000} & $5.14 \times 10^{-2}$& 1.58& $1.90 \times 10^{-2}$& 29.16&$8.66 \times 10^{-2}$ &810.72\\ 
& & {\small{(0.03)}} &  {\small{(0.05)}}& {\small{$0.24 \times 10^{-2}$}} & {\small{(3.21)}} & {\small{($0.64 \times 10^{-2}$)}} & {\small{(78.32)}}\\ \hline
\hline
\multirow{6}{*}{1.1}  & \multirow{2}{*}{500} & $8.55 \times 10^{-2}$&1.59 &$9.96 \times 10^{-3}$ & 7.53& $1.06 \times 10^{-2}$&60.95\\ 
& & {\small{(0.10)}} &  {\small{(0.05)}}& {\small{($0.60 \times 10^{-3}$)}} & {\small{(1.41)}} & {\small{($0.60 \times 10^{-2}$)}} & {\small{(9.94)}}\\ \cmidrule{3-8}
& \multirow{2}{*}{1000} & $4.11 \times 10^{-2}$& 1.58& $5.57 \times 10^{-3}$ &8.32 & $5.96 \times 10^{-3}$ &68.05\\ 
& & {\small{(0.05)}} &  {\small{ (0.05)}}& {\small{($0.30 \times 10^{-3}$)}} & {\small{(1.19)}} & {\small{($0.27 \times 10^{-2}$)}} & {\small{(11.17)}}\\ \cmidrule{3-8}
& \multirow{2}{*}{10000} & $5.14 \times 10^{-3}$& 1.60&$7.51 \times 10^{-4}$ & 11.14&$8.09 \times 10^{-4}$ &89.81\\ 
& & {\small{($0.40 \times 10^{-2}$)}} &  {\small{(0.10)}}& {\small{($0.30 \times 10^{-3}$)}} & {\small{(1.41)}} & {\small{($0.04 \times 10^{-2}$)}} & {\small{(9.62)}}\\ \hline
\hline
\multirow{6}{*}{1.7}  & \multirow{2}{*}{500} & $7.58 \times 10^{-2}$& 1.59 & $7.29 \times 10^{-3}$& 3.02& $8.14 \times 10^{-3}$&11.69\\ 
& & {\small{(0.07)}} &  {\small{(0.07)}}& {\small{($5.90 \times 10^{-3}$)}} & {\small{(0.58)}} & {\small{($0.75 \times 10^{-2}$)}} & {\small{(2.47)}}\\ \cmidrule{3-8}
& \multirow{2}{*}{1000} & $3.28 \times 10^{-2}$&1.58 &$3.95 \times 10^{-3}$ & 3.12 & $4.51 \times 10^{-3}$&12.20\\ 
& & {\small{(0.02)}} &  {\small{(0.05)}}& {\small{($2.50 \times 10^{-3}$)}} & {\small{(0.38)}} & {\small{($0.26 \times 10^{-2}$)}} & {\small{(1.72)}}\\ \cmidrule{3-8}
& \multirow{2}{*}{10000} & $4.24 \times 10^{-3}$& 1.59&$8.82 \times 10^{-4}$ & 3.82& $1.28 \times 10^{-3}$ &15.04\\ 
& & {\small{($0.34 \times 10^{-2}$)}} &  {\small{(0.08)}}& {\small{($0.50 \times 10^{-3}$)}} & {\small{(0.44)}} & {\small{($0.05 \times 10^{-2}$)}} & {\small{(1.57)}}\\ 
\end{tabular}
\caption{ Mean and standard deviation of the relative $L^2$ risk and selected cut-off $\hat{m}$ for $100$ estimators in the $\alpha$-stable non symmetric case $P=2, Q=0, A=0$.  }
\label{table:stable_NS}
\end{table}

\begin{table}
\begin{tabular}{c|cc||cc}
\multicolumn{1}{c}{ }&\multicolumn{2}{c||}{$\alpha=0.7$ } & \multicolumn{2}{c}{$\alpha=1.1$ }\\
\hline
 $n$   & $\frac{\nrm{\hat{g}_{\Delta,\hat{m}} - g_\Delta}^2}{\nrm{g_\Delta}^2}$   & $\overline{\hat{m}}$ &$\frac{\nrm{\hat{g}_{\Delta,\hat{m}} - g_\Delta}^2}{\nrm{g_\Delta}^2}$   & $\overline{\hat{m}}$                                                 \\ \hline
\multirow{2}{*}{500} & $2.78 \times 10^{-2}$ & 2.44& $9.43 \times 10^{-3}$&1.87\\
& {\small{(0.016)}} &  {\small{(0.40)}} &  {\small{(0.007)}} & {\small{(0.27)}}
\\ \hline
\multirow{2}{*}{1000} &  $1.89 \times 10^{-2}$& 2.63& $4.62 \times 10^{-3}$&1.99\\
&  {\small{(0.019)}} &  {\small{(0.30)}} &  {\small{(0.002)}} & {\small{ (0.24)}} \\
\end{tabular}
\caption{ Mean and standard deviation of the relative $L^2$ risk and selected cut-off $\hat{m}$ for $100$ estimators when $\Delta=1$ $n=500$ in the tempered stable non symmetric case $P=2, Q=0, A=1$.  }
\label{table:Tempered_stable_NS}
\end{table}

Figure \ref{figure:Stable_NS} illustrates the behaviour of our procedure for $\alpha$-stable processes, while Figure \ref{fig:Tempered_Stable} demonstrates it for tempered stable processes, displaying the outcomes of 50 estimators. Tables \ref{table:stable_Sym} and \ref{table:stable_NS} present the estimated relative $L^{2}$ risks, along with their standard deviations, for the symmetric and non-symmetric $\alpha$-stable cases, respectively. Additionally, Table \ref{table:Tempered_stable_NS} provides the same metrics for the tempered case. These estimations are obtained via Monte Carlo simulation over 100 iterations. The tables also include the mean and standard deviation of the selected $\hat{m}$ values.

A preliminary observation is that, as expected, we observe improvements in both the graphs and the risks as $n$ increases. Below we discuss the influence of $\alpha$ and $\Delta$.
We recall that the rate is provided in \eqref{eq:rate}, which increases as $\Delta$ decreases and increases as $\alpha$ decreases.

\paragraph*{Discussion on the influence of $\alpha$} Given $n$ and $\Delta$ we observe in Tables \ref{table:stable_Sym} and \ref{table:stable_NS} that indeed the relative $L^2$ error decreases with $\alpha$. The estimator is more accurate when the jump activity is higher. 
This phenomenon can be interpreted in various ways. In the case of high frequency observations, the fact that better results are obtained as $\alpha$ increases can be explained by the observation that as $\alpha$ becomes larger, $\|f_\Delta-g_\Delta\|^2$ gets smaller. For instance, using similar arguments as those employed in the proof of Theorem \ref{thm2}, one can show that $\|f_\Delta-g_\Delta\|^2\leq e^{2\lambda\Delta}\frac{\Gamma(1/\alpha)}{(2\alpha M)^{\frac{1}{\alpha}}}\Delta^{2-\frac{1}{\alpha}}$.
Consequently, as $\alpha$ increases, the inverse problem starts resembling more and more a direct problem, yielding more informative observations for the estimation of $g_\Delta$.
In the scenario where $\Delta$ is fixed, this phenomenon can be attributed to the fact that in such a regime the Gaussian approximation of small jumps is better, and the approximation improves as $\alpha$ increases, see e.g. Theorem 1 in \cite{Carpentier_2021}. With increasing $\alpha$, we move closer to a parametric problem, implying a potential improvement in convergence rates.

Moreover in both Figures \ref{figure:Stable_NS} and \ref{fig:Tempered_Stable} we notice that the larger $\alpha$, the larger the support of $g_{\Delta}$ gets. When $\alpha$ gets close to 2 the estimated curve is visually similar to the density of a Gaussian random variable. Finally, we observe that the supremum of $g_\Delta$ decreases with $\alpha$, which is consistent with the behavior in $\alpha$ of the bound given in Lemma \ref{lemmapicard} which allows to derive 
\begin{align}\label{UPg}\|{g_\Delta}\|_{\infty} \leq \tfrac{1}{2\eps}+ \tfrac1\alpha\tfrac{\pi}{2(\Delta M)^{\frac1\alpha}}\Gamma\left(\tfrac{1}{\alpha},\frac{\Delta M}{\eps^{\alpha}}\right).
\end{align}

\paragraph*{Discussion on the influence of $\Delta$}
In Tables \ref{table:stable_Sym} and \ref{table:stable_NS}, as well as in Figure \ref{figure:Stable_NS}, the risks appear to be smaller for $\Delta=0.1$ than when $\Delta=1$ or $\Delta=0.01$. This might seem surprising, given that in \eqref{eq:rate}, the rate seems to be smaller for larger values of $\Delta$. However, this observation is valid only when we neglect the quantity $e^{4\lambda\Delta}$. When we take this into account, we notice that the $L^2$ bound is indeed smaller for $\Delta=0.1$ compared to both $\Delta=0.01$ and $\Delta=1$, which is consistent with what is observed in Tables \ref{table:stable_Sym} and \ref{table:stable_NS}. Additionally, note that Figure \ref{figure:Stable_NS} illustrates that as $\Delta$ decreases, the estimated $\|\hat g_{\Delta,\hat m}\|_{\infty}$ increases, accordingly to the bound provided in \eqref{UPg}.

\paragraph{Comparison between $\alpha$-stable and tempered stable Lévy processes} The comparison of Table \ref{table:Tempered_stable_NS} with Table \ref{table:stable_NS} for $\Delta=1$ reveals better performances of the estimator in the tempered stable case compared to the $\alpha$-stable one. Due to  computational time limitations the case $\alpha=1.7$ is not considered in Table \ref{table:Tempered_stable_NS}. 
Heuristically, one of the distinctive characteristics of tempered stable processes is the reduced concentration of mass on big jumps, due to the exponential term in the Lévy measure. This means that we are more likely to observe small jumps in comparison to the $\alpha$-stable case. Consequently,  it was expected that the estimator would exhibit better performances in the tempered setting. This is emphasized in the case where $\Delta=1$ where the compensation of big jumps complicates the distinction between small and large jumps.

\subsection{Estimation in presence of a Brownian part}
Finally, we illustrate how the presence of a Gaussian component affects the numerical results. We demonstrate this phenomenon by perturbing the same $1$-stable Lévy process with the addition of a Brownian part $\sigma B$ for different values of $\sigma$. We observe in Table \ref{table:stable_brownian_Sym} and Figure \ref{fig:stable_brownian_sym} that the relative $L^2$ error deteriorates as $\sigma$ increases, as noted in Theorem \ref{thm:MB} and Corollary \ref{cor:rateB}.

\begin{table}[h!]
\begin{tabular}{cc||cc||cc||cc}
\multicolumn{2}{c||}{$\sigma=0$} &\multicolumn{2}{c||}{$\sigma=0.2$} & \multicolumn{2}{c||}{$\sigma=0.5$} & \multicolumn{2}{c}{$\sigma=1$} \\
\hline
   $\tfrac{\nrm{\hat{g}_{\Delta,\hat{m}} - g_\Delta}^2}{\nrm{g_\Delta}^2}$   & $\overline{\hat{m}}$   & $\tfrac{\nrm{\hat{g}_{\Delta,\hat{m}} - g_\Delta}^2}{\nrm{g_\Delta}^2}$   & $\overline{\hat{m}}$   &$\tfrac{\nrm{\hat{g}_{\Delta,\hat{m}} - g_\Delta}^2}{\nrm{g_\Delta}^2}$   & $\overline{\hat{m}}$   &$\tfrac{\nrm{\hat{g}_{\Delta,\hat{m}} - g_\Delta}^2}{\nrm{g_\Delta}^2}$   & $\overline{\hat{m}}$   \\
   \hline
 $1.72 \times 10^{-2}$& 1.597 & $1.91 \times 10^{-2}$& 1.596 &$2.11 \times 10^{-2}$ &1.582 & $9.97 \times 10^{-2}$& 1.589\\ 
 {\small{(0.016)}} &  {\small{(0.08)}}& {\small{(0.022)}} & {\small{(0.08)}} & {\small{(0.020)}} & {\small{(0.04)}} & {\small{(0.12)}}  & {\small{(0.06)}} \\
 \end{tabular}
\caption{ Mean and standard deviation of the estimated relative $L^2$ risk and selected cut-off $\hat{m}$ from 100 estimators for $\Delta=1$, $n=5000$, $\alpha=1$, $P=1, Q=1, A=0$.   }
\label{table:stable_brownian_Sym}
\end{table}

\begin{figure}[h!]
\center
\includegraphics[width=4.7cm]{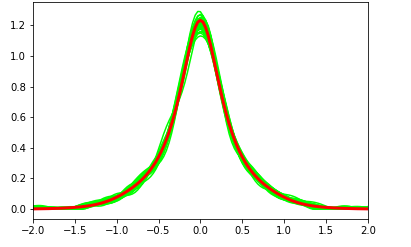}
\includegraphics[width=4.7cm]{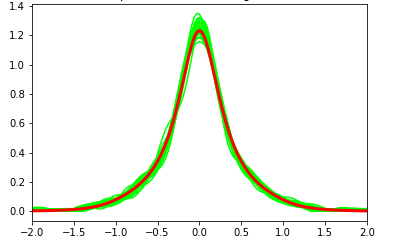}
\includegraphics[width=4.7cm]{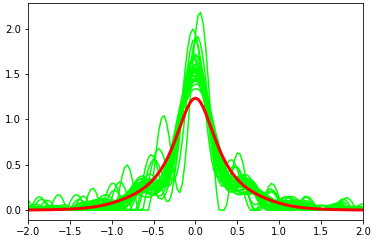}

\caption{Plot of $30$ realisations of $\hat{g}_{\Delta,\hat{m}}$ (green) using the same symmetric $1$-stable process (with $P=1,Q=1,A=0$) perturbed by a Brownian motion with $\sigma \in \{0, 0.2,0.5\}$ and an approximation of  $g_\Delta$ (red) by Fourier inversion with $\Delta=0.1$, $n=5000$.}
\label{fig:stable_brownian_sym}
\end{figure}

\section{Proofs}

\subsection{Proof of Theorem \ref{thm2}}

To control the integrated $L^{2}$-risk introduce the notation $f_{\Delta,m}$, the uniquely defined function with Fourier transform $\mathcal F f_{\Delta,m}= (\mathcal F f_{\Delta})\mathds{1}_{[-m,m]}$, where $f_{\Delta}$ is the density of $X_{\Delta}$. We write the decomposition 
 \begin{align*}\E[\|\tilde{g}_{\Delta,m}-g_{\Delta}\|^2]
 = \|f_{\Delta,m}-g_{\Delta}\|^2+\frac{1}{2\pi}\int_{- m}^{ m} \E[|\hat\phi_{X_{\Delta}}(u)-\phi_{X_{\Delta}}(u)|^2]d u.\end{align*}  The second variance term is easily bounded as in the proof of Theorem \ref{thm1} by $\frac{e^{4}m}{\pi n}$, under the assumption $\lambda \Delta\le 1$. The first term is  a bias term for which we can write 
 \begin{align}
 \|f_{\Delta,m}-g_{\Delta}\|^2&\le 2\|f_{\Delta,m}-g_{\Delta,m}\|^2+2\|g_{\Delta,m}-g_{\Delta}\|^2\nonumber.
 \end{align} 
 An upper bound for $\|g_{\Delta,m}-g_{\Delta}\|^2$ is provided  in \eqref{eq:biasF}.
By means of Plancherel's equality and \eqref{eq:minor}, it holds:
 \begin{align*}
\|f_{\Delta,m}-g_{\Delta,m}\|^2&=\frac1{2\pi}\int_{[-m,m]}|\phi_{X_{\Delta}}(u)|^{2}\left|1-\frac1{\phi_{X^{B}_{\Delta}}(u)}\right|^{2}du.
 \end{align*} 
 Recalling that $(\phi_{X_\Delta^B}(u))^{-1}=e^{\lambda\Delta(1-\phi_{Y_1}(u))}$, by means of the mean value theorem we get
 $$\bigg|1-\frac{1}{\phi_{X_\Delta^B}(u)}\bigg|\leq e^{2\lambda\Delta}-1\leq 2\lambda \Delta e^{2\lambda \Delta}\leq  2\lambda \Delta e^2.$$
Hence,  \begin{align*}
\|f_{\Delta,m}-g_{\Delta,m}\|^2&\le\frac{2e^{4}(\lambda\Delta)^2}{\pi}\int_{\R}|\phi_{X_{\Delta}}(u)|^{2}du.
 \end{align*} 
 Furthermore, using \eqref{eq:Picard1} in Lemma \ref{lemmapicard} and the inequality $\Gamma(s,x)\leq 2^se^{-x/2}\Gamma(s)$ for $s,x>0$, we obtain 
 \begin{align*}
 \|\phi_{X_{\Delta}}\|_2^{2}&\leq \int_{|u|\geq \frac{\pi}{2\eps}}  |\phi_{X_{\Delta}}(u)|^2du+\int_{|u|\leq \frac{\pi}{2\eps}} |\phi_{X_{\Delta}}(u)|^2du\leq 2\int_{\frac{\pi}{2\eps}}^{\infty} e^{-\frac{2^{\alpha+1}M}{\pi^{\alpha}}u^{\alpha}\Delta}du+ \frac{\pi}{\eps}\\
 &= \Delta ^{-\frac1\alpha}\frac{\pi }{2^{1+\frac1\alpha}\alpha M^{\frac1\alpha}}\Gamma\left(\frac1\alpha, \frac{2M\Delta}{\eps^{\alpha}}\right)+\frac\pi\eps\leq \Delta ^{-\frac1\alpha}\frac{\pi }{2^{1+\frac1\alpha}\alpha M^{\frac1\alpha}}e^{-\frac{M\Delta}{2\eps^{\alpha}}}2^{\frac1\alpha}\Gamma(\tfrac1\alpha)+
 \frac\pi\eps\\
 &\leq K'\left( \Delta^{-\frac1\alpha}e^{-\kappa\frac{\Delta}{\eps^{\alpha}}}+\tfrac1\eps\right),
 \end{align*}
 for some positive constants $K',\, \kappa$, depending on $\alpha$ and $M$. Gathering both terms we derive that there exist two positive constants $K$ and $\kappa$ such that
  \begin{align}
\|f_{\Delta,m}-g_{\Delta,m}\|^2&\le K\lambda^2\left(\Delta^{2-\frac{1}{\alpha}}e^{-\kappa\frac{\Delta}{\eps^{\alpha}}}+\Delta^{2}\eps^{-1}\right). \label{eq:biasFG0}
 \end{align} 
Collecting all terms, we derive the desired result.
\subsection{Proof of Theorem \ref{thmLB}}
We begin by showing that 

$$\inf_{\hat g_{\Delta}}\sup_{g_{\Delta}\in \mathcal A_{M,\alpha_0}}\E\|\hat g_{\Delta}-g_{\Delta}\|^{2}\geq \inf_{\hat f_{\Delta}}\sup_{f_{\Delta}\in {\mathcal S}_{\alpha_0}}\E\|\hat f_{\Delta}-f_{\Delta}\|^{2},$$
where $\mathcal S_{\alpha_0}$ denotes the class of symmetric $\alpha$-stable densities with $\alpha\in[\alpha_0,2)$ and  with characteristic function $e^{-\Delta |u|^{\alpha}}$ and the second infimum is taken over all possible estimators $\hat f_\Delta$ of the density $f_\Delta$ of $X_{\Delta}$ (with respect to the Lebesgue measure). 
Let $\mu =\delta_{0}+{\rm Leb}$ and denote by $\mathcal{F_{\mu}}$ the set of densities with respect to the measure $\mu$ and by $h_\Delta$ the density of the law of $X_\Delta^B$ with respect to $\mu$. 
 We make two observations: first $\|h_{\Delta}\|_{1,\mu}=\int h_{\Delta}(x) \mu(dx)=1\geq \|h_\Delta\|_1$ and second the Young inequality allows to write that $\|g\star h\|_{2}\le \|g\|_{2}\|h\|_{1}$ for $g\in L_{2}$ and $h\in L_{1}$. Consequently, we derive that
\begin{align*}
\inf_{\hat g_{\Delta}}\sup_{g_{\Delta}\in \mathcal A_{M,\alpha_0}}\E\|\hat g_{\Delta}-g_{\Delta}\|^{2}&=\inf_{\hat g_{\Delta}}\sup_{g_{\Delta}\in \mathcal A_{M,\alpha_0}}\sup_{h_{\Delta}\in \mathcal F_{\mu}	}\E\|\hat g_{\Delta}-g_{\Delta}\|^{2}\|h_{\Delta}\|_{1,\mu}^{2}\\ &\ge \inf_{\hat g_{\Delta}}\sup_{g_{\Delta}\in \mathcal A_{M,\alpha_0}}\sup_{h_{\Delta}\in \mathcal F_{\mu}	}\E\|\hat g_{\Delta}\star h_{\Delta}-g_{\Delta}\star h_{\Delta}\|^{2}
\\ &= \inf_{\hat g_{\Delta}}\sup_{{f_{\Delta}, f_{\Delta} = g_{\Delta}\star h_{\Delta}}\atop{g_{\Delta\in \mathcal A_{M,\alpha_0},\,h_{\Delta}\in \mathcal F_{\mu}	}}}\E\|\hat g_{\Delta}\star h_{\Delta}-f_{\Delta}\|^{2}
\\ &\ge \inf_{\hat f_{\Delta}}\sup_{f_{\Delta}\in {\mathcal S}_{\alpha_0}}\E\|\hat f_{\Delta}-f_{\Delta}\|^{2},
\end{align*} 
where we used that if $f_{\Delta}\in \mathcal S_{\alpha_0}$ then for \begin{align}
M\le F(\alpha_{0}):
=\begin{cases}
\left (\frac{2-\alpha_{0}}{\alpha_{0}}\cos(\frac{\pi\alpha_{0}}{2})\Gamma({1-\alpha_{0}})\right)^{-1} & \text{if }\alpha_{0}\ne 1,\\
\frac2\pi  & \text{if }\alpha_{0}= 1,\\
\end{cases}\label{eq:MajM}
\end{align} 
there exists $g_\Delta\in \mathcal A_{M,\alpha_0}$ and $h_{\Delta}\in \mathcal F_{\mu}$ such that $f_\Delta=g_\Delta\star h_\Delta$. Indeed, if $f_\Delta$ is the marginal density of $X_\Delta$, its characteristic function is $e^{-\Delta|u|^{\alpha_{0}}}$ and it has a L\'evy density $p(x) = \frac{P}{|x|^{1+\alpha_0}}\ind_{x\neq 0}$ with $P=\left(2\cos\left(\frac{\pi\alpha_{0}}{2} \right) \alpha_0^{-1} \Gamma(1-\alpha_0)\right)^{-1}$ (see \cite{belomestny2015levy} p.215). 
The proof is then concluded by using Steps 2,3 and 4 of the proof of Theorem 3 in \cite{DJM} where it is shown that
$$\inf_{\hat f_{\Delta}}\sup_{f_{\Delta}\in {\mathcal S}_{\alpha_0}}\E\|\hat f_{\Delta}-f_{\Delta}\|^{2}\geq \frac{c_{0}}{n\Delta^{\frac{1}{\alpha_0}}},$$
 for $c_{0}$ a strictly positive constant only depending on $\alpha_0$.

\subsection{Proof of Theorem \ref{thm:adapt}}
Firstly, we observe that
$$\gamma_{n}(\hat g_{\Delta,\hat m})+\mbox{pen}(\hat m)\le\gamma_{n}(\hat g_{\Delta, m})+\mbox{pen}( m). $$ Moreover
\begin{align*}\gamma_{n}(t)-\gamma_{n}(s)&=\|t-g_{\Delta}\|^{2} -\|s-g_{\Delta}\|^{2}-2\langle g_{\Delta} ,t-s\rangle-\frac1\pi\langle \hat \phi_{Z_{\Delta}}, \mathcal F(t-s)\rangle\\&=\|t-g_{\Delta}\|^{2} -\|s-g_{\Delta}\|^{2}-2\nu_{n}(t-s),\end{align*}
where
$$\nu_{n}(t)=\frac1{2\pi}\langle \hat \phi_{Z_{\Delta}}-\phi_{Z_{\Delta}},\mathcal F(t)\rangle=\frac{1}{2\pi n}\sum_{j=1}^{n}\int \left(\frac{e^{iu(X_{j\Delta}-X_{(j-1)\Delta})}}{\phi_{X^{B}_{\Delta}}(u)}-\frac{\E[e^{iuX_{\Delta}}]}{\phi_{X^{B}_{\Delta}}(u)}\right)\mathcal F t(-u)du,$$
 using Plancherel. 
Combining these results, we derive  
\begin{align*}&\|\hat g_{\Delta,\hat m}-g_{\Delta}\|^{2}\\&\le \|\hat g_{\Delta,m}-g_{\Delta}\|^{2}+2\nu_{n}(\hat g_{\Delta,\hat m}-\hat g_{\Delta, m})+\mbox{pen}(m)-\mbox{pen}(\hat m)\\
&=  \|\hat g_{\Delta,m}-g_{\Delta}\|^{2}+2\|\hat g_{\Delta,\hat m}-\hat g_{\Delta, m}\|\nu_{n}\left(\frac{\hat g_{\Delta,\hat m}-\hat g_{\Delta, m}}{\|\hat g_{\Delta,\hat m}-\hat g_{\Delta, m}\|}\right)+\mbox{pen}(m)-\mbox{pen}(\hat m)\\
&\le  \|\hat g_{\Delta,m}-g_{\Delta}\|^{2}+2\|\hat g_{\Delta,\hat m}-\hat g_{\Delta, m}\|\sup_{t\in S_{m}+S_{\hat m},\| t\|=1}\nu_{n}(t)+\mbox{pen}(m)-\mbox{pen}(\hat m)\\
&\le  \|\hat g_{\Delta,m}-g_{\Delta}\|^{2}+\frac14\|\hat g_{\Delta,\hat m}-\hat g_{\Delta, m}\|^{2}+4\sup_{t\in S_{m}+S_{\hat m},\| t\|=1}\nu_{n}(t)^{2}+\mbox{pen}(m)-\mbox{pen}(\hat m)\\
&\le\frac32\|\hat g_{\Delta,m}-g_{\Delta}\|^{2}+\frac12\|\hat g_{\Delta,\hat m}- g_{\Delta}\|^{2}+4\left(\sup_{t\in S_{m}+S_{\hat m},\| t\|=1}\nu_{n}(t)^{2}-p(m,\hat m)\right)_{+}\\ & \quad\quad+4p(m,\hat m)+\mbox{pen}(m)-\mbox{pen}(\hat m),
\end{align*} where $p(m,m')=\frac4\pi e^{4\lambda\Delta}(m\vee m')/n$ is fixed by applying the Talagrand inequality to $\nu_{n}$ (see the following Lemma \ref{lem:Tal}). Note that $S_{m}+S_{m'}=S_{m\vee m'}.$

\begin{lemma}\label{lem:Tal}
There exists a positive constant $C$ such that 
\begin{align*}
\mathbb{E}\left(\sup_{t\in {S_{m\vee \hat m},\ \|t\|=1}}|\nu_{n}(t)|^2-4e^{4\lambda\Delta}\frac{m\vee \hat m}{\pi n}\right)_+ &\le \frac{C}{n}.
\end{align*}
\end{lemma}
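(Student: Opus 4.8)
The plan is to apply Talagrand's concentration inequality to the centered empirical process $\nu_n$ restricted to the unit ball of $S_{m\vee\hat m}$, and for that I would first fix a candidate value $m'$ (playing the role of $m\vee \hat m$) and control the three quantities that enter Talagrand's bound: the expected supremum $H$, the uniform bound $b$ on the individual summands, and the variance proxy $v$. Write $\nu_n(t)=\frac1n\sum_{j=1}^n\big(\psi_t(X_{j\Delta}-X_{(j-1)\Delta})-\E[\psi_t(X_\Delta)]\big)$ with $\psi_t(x)=\frac1{2\pi}\int \frac{e^{iux}}{\phi_{X^B_\Delta}(u)}\mathcal F t(-u)\,du$. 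Using the orthonormal basis $(\psi_{m',j})_{j\in\Z}$ of $S_{m'}$ from \eqref{eq:basis} and Parseval, the supremum over $\|t\|=1$ of $\nu_n(t)$ equals $\big(\sum_j \nu_n(\psi_{m',j})^2\big)^{1/2}$, so $\E\big[\sup_{\|t\|=1}\nu_n(t)^2\big]=\sum_j \V(\nu_n(\psi_{m',j}))\le \frac1n\sum_j \E[|\psi_{\psi_{m',j}}(X_\Delta)|^2]$; a Plancherel computation together with the lower bound \eqref{eq:minor}, $|\phi_{X^B_\Delta}(u)|\ge e^{-2\lambda\Delta}$, gives $\sum_j \E[|\psi_{\psi_{m',j}}(X_\Delta)|^2]\le \frac{1}{2\pi}\int_{-m'}^{m'}\frac{1-|\phi_{X_\Delta}(u)|^2}{|\phi_{X^B_\Delta}(u)|^2}\,du\le \frac{e^{4\lambda\Delta}m'}{\pi}$. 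Hence one may take $H^2=\frac{e^{4\lambda\Delta}m'}{\pi n}$, which is exactly one quarter of the penalty-type term $p(m,m')=\frac{4}{\pi}e^{4\lambda\Delta}m'/n$ appearing in the statement — this matching of constants is the reason the factor $4$ is chosen there.

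Next I would bound $v$ and $b$. For the variance term, for any $t$ with $\|t\|=1$ and Fourier support in $[-m',m']$, $\V(\psi_t(X_\Delta))\le \E[|\psi_t(X_\Delta)|^2]\le \|\psi_t\|_\infty^2$ is too crude; instead bound $\V(\psi_t(X_\Delta))\le \frac1{2\pi}\int_{-m'}^{m'}|\mathcal F t(u)|^2\,du\cdot \sup_{|u|\le m'}\frac1{|\phi_{X^B_\Delta}(u)|^2}\le \frac{e^{4\lambda\Delta}}{2\pi}\|t\|^2= \frac{e^{4\lambda\Delta}}{2\pi}$ (using $|\mathcal F t|\le$ ... more carefully, via Cauchy–Schwarz on the inverse Fourier representation of $\psi_t(x)$), giving $v\lesssim e^{4\lambda\Delta}$. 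For the sup-norm bound, $|\psi_t(x)|\le \frac1{2\pi}\int_{-m'}^{m'}\frac{|\mathcal F t(u)|}{|\phi_{X^B_\Delta}(u)|}\,du\le \frac{e^{2\lambda\Delta}}{2\pi}\sqrt{2m'}\,\|t\|$ by Cauchy–Schwarz, so $b\lesssim e^{2\lambda\Delta}\sqrt{m'}$. Plugging $H$, $v$, $b$ into Talagrand's inequality in the Klein–Rio or Bousquet form yields, for suitable numerical constants, $\E\big(\sup_{\|t\|=1}\nu_n(t)^2-4H^2\big)_+\le \frac{C_1}{n}\big(v\,e^{-C_2 m'}+\frac{b^2}{n}e^{-C_3\sqrt n}\big)$ (the precise decaying form depends on the version used), and then summing this bound over the finitely many values $m'\in\{\lceil\pi/(2\eps)\rceil,\dots,n\}$ — the geometric/subexponential decay in $m'$ makes the sum converge — and replacing $m'$ by the random $m\vee\hat m$ via $\big(\cdot\big)_+\le \sum_{m'}\big(\cdot\big)_+$ gives the claimed $\E\big(\sup_{t\in S_{m\vee\hat m},\|t\|=1}|\nu_n(t)|^2-4e^{4\lambda\Delta}\frac{m\vee\hat m}{\pi n}\big)_+\le \frac{C}{n}$, with $C$ depending on $\alpha,M$ through $\lambda$ and $\eps$.

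The main obstacle I anticipate is organizing the Talagrand bound so that the ``$-4H^2$'' subtracted quantity exactly cancels the leading expected-supremum term and leaves only a remainder that is both summable in $m'$ and $O(1/n)$ after summation; this requires being slightly careful about which version of Talagrand's inequality is invoked (so that the residual, after subtracting a constant multiple of $H^2$, decays fast enough in $m'$) and about tracking the dependence of all constants on $e^{\lambda\Delta}$, which is harmless here only because the standing assumption $\lambda\Delta\le 1$ — or $\lambda\Delta=O(1)$ as in Remark \ref{rmq1} — keeps $e^{4\lambda\Delta}$ bounded. A secondary technical point is justifying the exchange of sum and integral in the basis expansion of $\sup_{\|t\|=1}\nu_n(t)^2$, which follows from the fact that $\hat\phi_{Z_\Delta}-\phi_{Z_\Delta}$ restricted to $[-m',m']$ is in $L^2$ by \eqref{eq:minor}. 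Everything else is a routine, if somewhat lengthy, bookkeeping exercise paralleling the adaptive argument of \cite{comte2010nonparametric}.
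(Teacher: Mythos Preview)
Your overall strategy---fix $m'\in\{1,\dots,n\}$, apply Talagrand's inequality to $\nu_n$ on the unit ball of $S_{m\vee m'}$, then sum over $m'$ to absorb the random $\hat m$---is exactly the paper's approach, and your values $H^2=e^{4\lambda\Delta}(m\vee m')/(\pi n)$ and $b\asymp e^{2\lambda\Delta}\sqrt{m\vee m'}$ coincide with the paper's $H$ and $M$. The discrepancy is in the variance constant $v$.

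Your displayed inequality $\V(\psi_t(X_\Delta))\le \frac{1}{2\pi}\int_{-m'}^{m'}|\mathcal F t(u)|^2\,du\cdot\sup_{|u|\le m'}|\phi_{X^B_\Delta}(u)|^{-2}$ is not justified as written: the right-hand side is essentially $\|\psi_t\|_2^2$, and $\E[|\psi_t(X_\Delta)|^2]=\int|\psi_t|^2 f_\Delta$ is \emph{not} bounded by $\|\psi_t\|_2^2$ without a density factor. The fix is to insert $\|f_\Delta\|_\infty$ (finite under \eqref{Ass:p} by Lemma~\ref{lemmapicard}), which gives $v^2\le \|f_\Delta\|_\infty e^{4\lambda\Delta}$, a constant in $m'$; then the first Talagrand remainder decays like $n^{-1}e^{-c\,m'}$ and is summable. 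So the gap is real but easily repaired. (Also, $\lambda\Delta\le 1$ is not assumed in Theorem~\ref{thm:adapt}; $e^{4\lambda\Delta}$ is simply a fixed constant in the low-frequency setting.)

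The paper bounds $v$ differently: using the orthonormal-basis representation of $t$ and a Cauchy--Schwarz argument over the basis indices $(j,k)$ (as in \cite{comte2010nonparametric}), it rewrites $\V(f_t(X_\Delta))$ as a double integral and obtains $v^2\asymp e^{4\lambda\Delta}\sqrt{m\vee m'}\,\|f_\Delta\|_2$, so that the first Talagrand term is of order $n^{-1}\sqrt{m'}\,e^{-c\sqrt{m'}}$. Both remainders are summable in $m'$; your corrected bound is simpler but uses $\|f_\Delta\|_\infty$ rather than $\|f_\Delta\|_2$, which is immaterial here since $\Delta$ is fixed.
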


Plugging this result in above inequalities implies that
\begin{align*}
\frac12\E[\|\hat g_{\Delta,\hat m}-g_{\Delta}\|^{2}]\le \frac32&\left(\E[\|\hat g_{\Delta,m}-g_{\Delta}\|^{2}]+\mbox{pen}(m)\right)+ \frac{4C}{n}\\&+\E[4p(m,\hat m)-\mbox{pen}(\hat m)]-\frac12\mbox{pen}( m),
\end{align*} using that for $\kappa>32/(3\pi)$
$$ 4p(m,\hat m)-\mbox{pen}(\hat m)-\frac12\mbox{pen}( m)= \frac{e^{4\lambda\Delta}}n\left(\frac{16}\pi {(m\vee \hat m)}-\kappa\left(\frac 12m+\hat m\right)\right)\le 0.$$ 
The proof is completed by taking the infimum over $m$.

\begin{proof}[Proof of Lemma \ref{lem:Tal}] We apply the Talagrand inequality recalled in Lemma \ref{lem:T} in the Appendix. Note that we can write 
$$ \nu_{n}(t)=\frac1n\sum_{j=1}^{n}\big(f_{t}(X_{j\Delta}-X_{(j-1)\Delta})-\E[f_t(X_{\Delta})]\big)$$
where for $t\in S_{m\vee m'}$, $f_{t}(x)=\frac1{2\pi}\int_{-m\vee m'}^{m\vee m'}\frac{e^{iux}}{\phi_{X^{B}_{\Delta}}(u)}\mathcal Ft(-u)du.$  For that we compute the three positive constants $M$, $H$ and $v$ introduced in Lemma \ref{lem:T}. First note that as $\|t\|=1$ we get using Cauchy-Schwarz and \eqref{eq:minor} that
\begin{align*}
\sup_{t\in S_{m\vee m'},\|t\|=1 }\| f_{t}\|_{\infty}&\le \left\|\frac{1}{\phi_{X^{B}_{\Delta}}}\right\|_{\infty}\frac{1}{2\pi}\sqrt{2(m\vee m')}\sqrt{\int_{-m\vee m'}^{m\vee m'}|\mathcal{F}t(u)|^{2}du}\\&\le e^{2\lambda\Delta}\frac{\sqrt{m\vee m'}}{\sqrt{\pi}}=:M.
\end{align*} Using similar arguments  we get
\begin{align*}
 \E\left(\sup_{t\in S_{m\vee m'},\|t\|=1}|\nu_{n}({t})|\right)^{2}&\le \E\left[\sup_{t\in S_{m\vee m'},\|t\|=1}\nu_{n}({t})^{2}\right]\\&\le\frac1{2\pi}\int_{-m\vee m'}^{m\vee m'}\frac{\E[|\hat \phi_{X_{\Delta}}(u)-\phi_{X_{\Delta}}(u)|^{2}]}{|\phi_{X^{B}_{\Delta}}(u)|^{2}}du\\
&\le \frac{e^{4\lambda\Delta}}{\pi n}(m\vee m')=:H^{2}.
\end{align*} Finally for the last term, following \cite{comte2010nonparametric} we use the basis representation of the estimator to compute $v^{2}$. Indeed, using  the basis \eqref{eq:basis} it holds $t=\sum_{j\in\Z}b_{j}\psi_{j,m\vee m'}$ with $b_{j}=\langle t,\psi_{j,m\vee m'}\rangle$ such that $\sum_{j\in \Z}b_{j}^{2}=1,$ and we can write
\begin{align*}
\V(f_{t}(X_{\Delta}))&\le \frac{1}{4\pi^{2}}\iint_{-m\vee m'}^{m\vee m'}\frac{\phi_{X_{\Delta}}(u-v)}{\phi_{X^{B}_{\Delta}}(u)\phi_{X^{B}_{\Delta}}(-v)}\mathcal Ft(-u)\mathcal Ft(v)dudv\\
&=\frac{1}{4\pi^{2}}\sum_{j,k\in\Z}b_{j}b_{k}\iint_{-m\vee m'}^{m\vee m'}\frac{\phi_{X_{\Delta}}(u-v)}{\phi_{X^{B}_{\Delta}}(u)\phi_{X^{B}_{\Delta}}(-v)}\mathcal{F}\psi_{j,m\vee m'}(-u)\mathcal{F}\psi_{k,m\vee m'}(v)dudv\\
&\hspace{-1cm}\le \frac{1}{4\pi^{2}}\sqrt{\sum_{j,k\in\Z}b_{j}^{2}b_{k}^{2}\sum_{j,k\in\Z}\left|\iint_{-m\vee m'}^{m\vee m'}\frac{\phi_{X_{\Delta}}(u-v)}{\phi_{X^{B}_{\Delta}}(u)\phi_{X^{B}_{\Delta}}(-v)}\mathcal{F}\psi_{j,m\vee m'}(-u)\mathcal{F}\psi_{k,m\vee m'}(v)dudv\right|^{2}}\\
&= \frac{1}{4\pi^{2}}\sqrt{\iint_{-m\vee m'}^{m\vee m'}\left|\frac{\phi_{X_{\Delta}}(u-v)}{\phi_{X^{B}_{\Delta}}(u)\phi_{X^{B}_{\Delta}}(-v)}\right|^{2}dudv}\\ &\le \frac{e^{4\lambda\Delta}}{4\pi^{2}}\sqrt{\iint_{-m\vee m'}^{m\vee m'}\left|{\phi_{X_{\Delta}}(u-v)}\right|^{2}dudv}\le \frac{\sqrt{\pi}e^{4\lambda\Delta}}{2\pi^{2}}\sqrt{{m\vee m'}}\| f_{X_{\Delta}}\|,
\end{align*} where we used  at the third line the Cauchy-Schwarz inequality on the index $\lambda=(j,k)$ and at the penultimate equality that for a bi-variate function $\phi(u,v)$ its norm can be computed as $\|\phi\|^{2}=\sum_{j,k\in \Z}\langle \phi,\psi_{j,m\vee m'}\otimes \psi_{k,m\vee m'}\rangle^{2}$.
Therefore,
\begin{align*}
\sup_{t\in S_{m\vee m'},\|t\|=1}\frac1n\sum_{j=1}^{n}\V(f_{t}(X_{j\Delta}-X_{(j-1)\Delta}))&\le \frac{\sqrt{\pi}e^{4\lambda\Delta}}{2\pi^{2}}\sqrt{{m\vee m'}}\| f_{X_{\Delta}}\|=:v^{2}.
\end{align*} It follows from the Talagrand inequality (see Lemma \ref{lem:T}, $\delta=1/2$) that there exist positive constants $C_{1},\ C_{2}$ and $C_{3}$ such that
\begin{eqnarray*}
\mathbb{E}\left(\sup_{t\in {S_{m\vee m'},\ \|t\|=1}}|\nu_{n}(t)|^2-4e^{4\lambda\Delta}\frac{m\vee m'}{\pi n}\right)_+ &\hspace{-0.1cm}\leq   C_{1}\left(\frac {\sqrt{m\vee m'}}n e^{-C_{2}  \sqrt{m\vee m'}}+
\frac{m\vee m'}{n^2} e^{-C_{3}\sqrt{n}}\right).
\end{eqnarray*}
Finally,  \begin{align*}
&\mathbb{E}\left(\sup_{t\in {S_{m\vee \hat m},\ \|t\|=1}}|\nu_{n}(t)|^2-4e^{4\lambda\Delta}\frac{m\vee \hat m}{\pi n}\right)_+\hspace{-0.3cm}\\& \leq  \sum_{m'=1}^{n} \mathbb{E}\left(\sup_{t\in {S_{m\vee  m'},\ \|t\|=1}}|\nu_{n}(t)|^2-4e^{4\lambda\Delta}\frac{m\vee  m'}{\pi n}\right)_+\\
\quad&\le \frac{C_{1}}n\sum_{m'=1}^{n}\left( {\sqrt{m\vee m'}} e^{-C_{2}  \sqrt{m\vee m'}}+
\frac{m\vee m'}{n} e^{-C_{3}\sqrt{n}}\right)\le \frac{C}{n}.
\end{align*} \end{proof}

\subsection{Proof of Theorem \ref{thm:MB}}
The proof is similar to the one of Theorem \ref{thm1}, the only difference lies in the treatement of the variance term.
This term now writes
 \begin{align*}\E[|\check\phi_{Z_{\Delta}}(u)-\phi_{Z_{\Delta}}(u)|^2]&=\frac{\E[|\hat\phi_{X_{\Delta}}(u)-\phi_{X_{\Delta}}(u)|^2]}{|\phi_{{X^{B}_{\Delta}}}(u)|^{2}|\phi_{\sigma W_\Delta}(u)|^{2}}= \frac{1-|\phi_{{X_{\Delta}}}(u)|^{2}}{|\phi_{{X^{B}_{\Delta}}}(u)|^{2}}\frac{e^{\sigma^2 \Delta u^2}}{n},
  \end{align*} which yields the following bound
 \begin{align}\E[\|\check{g}_{\Delta,m}-g_{\Delta,m}\|^2]\le \frac{e^{4\lambda\Delta}}{\pi}\frac{\int_{0}^m e^{\sigma^2 \Delta u^2}du}{n}.
 \label{eq:VarF_brownian}
 \end{align}
A change of variable completes the proof.

\subsection{Proof of Corollary \ref{cor:rateB}}
The upper bound in Theorem \ref{thm:MB} is a square bias and variance decomposition, for $\alpha=1$ we compute $m^{\star}$ that minimizes the quantity:
\begin{align}\label{biasvar}
C\Delta^{-1}\Gamma\left(1,c\Delta m\right)+\frac{e^{4\lambda\Delta}}{\pi} \frac {\int_{0}^{m\sqrt{\Delta}\sigma} e^{z^2}dz}{n\sqrt{\Delta}\sigma}&= \frac{C}{\Delta}e^{-c\Delta m}+\frac{e^{4\lambda\Delta}}{\pi} \frac {\int_{0}^{m\sqrt{\Delta}\sigma} e^{z^2}dz}{n\sqrt{\Delta}\sigma}\\ &\nonumber=:B^{2}(m)+V(m). \end{align}
Differentiating in $m$, we get that  $m^{\star}$ is solution of the following equivalent equations
\begin{align}
  -C c e^{-c\Delta m} + \frac{e^{4\lambda\Delta}}{\pi n} e^{\sigma^2 \Delta {m}^2}&=0,  \label{eq:m_alpha1_log}\quad 
e^{\sigma^2 \Delta m^2 + c \Delta m} = c_\lambda n,
\end{align} where  $c_\lambda = \pi cC e^{-4\lambda \Delta}$. Considering the positive root of this equation we derive that
\begin{align*}
m^{\star} = \frac{-c + \sqrt{c^2 + 4\sigma^2 \Delta^{-1} \log(c_\lambda n)}}{2\sigma^2}= \frac{2\Delta^{-1} \log(c_\lambda n)}{c + \sqrt{c^2 + 4\sigma^2 \Delta^{-1} \log(c_\lambda n)}}.
\end{align*} 

\paragraph{Case $\Delta\sigma\ge a>0$, $\sigma$ fixed}
As $\sigma\Delta^{1/2}m^\star\rightarrow \infty$, we use that $\int_{0}^x e^{\beta y^{2}} dy \underset{x\rightarrow \infty}{\sim} \frac{1}{2\beta x} e^{\beta x^2}$, joined with \eqref{biasvar} and \eqref{eq:m_alpha1_log} to derive that the variance term is asymptotically equivalent to 
\begin{align*}
V(m^{\star}):= \frac{e^{4\lambda\Delta}}{\pi \sigma \Delta^{1/2} n} \int_{0}^{\sigma \Delta^{1/2} m^{\star}} e^{z^2} dz \underset{n\rightarrow \infty}{\sim} \frac{e^{4\lambda\Delta}}{2\pi \sigma^2 \Delta m^{\star}n} e^{\sigma^2 \Delta{m^\star}^2} &= \frac{Cc}{2\sigma^2\Delta m^{\star}} e^{-c \Delta m^{\star}}\\&= \frac{c B^2(m^{\star})}{2\sigma^2 m^{\star}}.
\end{align*}
As $m^{\star} \rightarrow \infty$ and $\sigma$ is fixed, the $L^2$ error bound is asymptotically bias dominated, therefore the bias term dictates the rate (see also \cite{MR2354572,MR2742504}). 
In order to derive the corresponding convergence rate which will be of order ${\Delta^{-1}}e^{-c\Delta m^{\star}}$, we compute the order of the exponent $c\Delta m^{\star}$. 
Using the explicit form of $m^\star$, we write for $R^2= \sigma^2\Delta^{-1} \log(c_\lambda n) \rightarrow \infty$\begin{align*}
c\Delta m^\star &= \frac{2c \log(c_\lambda n)}{c + \sqrt{c^2 + 4\sigma^2 \Delta^{-1} \log(c_\lambda n)}} 
= \frac{c\Delta^{1/2} \log(c_\lambda n)^{1/2}}{\sigma} \frac{1}{\tfrac{c}{2R} + \sqrt{1+ \big(\tfrac{c}{2R}\big)^{2}}}\\
&= \frac{c \Delta^{1/2} \log(c_\lambda n)^{1/2}}{\sigma} - \frac{c^2 \Delta}{2\sigma} + o(1),
\end{align*}
implying that $c\Delta m^\star - \left(\frac{c \Delta^{1/2} \log(c_\lambda n)^{1/2}}{\sigma} - \frac{c^2 \Delta}{2\sigma} \right) \underset{n\rightarrow \infty}{\rightarrow} 0.$ Plugging this in the square bias term,  we derive the announced convergence rate: 
$
\E\left[ \nrm{\check{g}_{\Delta,m^{\star}} - g_\Delta}^2\right] \leq K\ e^{-\kappa \sqrt{\log n}}$
where $K$ and $\kappa$ are positive constants.

\paragraph{Case where $\sigma$ goes to 0 rapidly} Note that the integrand of the variance term writes \begin{align*}
\sigma \Delta^{1/2} m^{\star} = \frac{2\sigma\Delta^{-1/2} \log(c_\lambda n)}{c + \sqrt{c^2 + 4\sigma^2 \Delta^{-1} \log(c_\lambda n)}} = \frac{2{R\log(c_\lambda n)^{1/2} }}{c + \sqrt{c^2 + 4R^2}}.
\end{align*}  If $\sigma^{2}\Delta^{-1} (\log n)^{2}\le 1$, then $\sigma\Delta^{1/2}m^\star$ remains bounded.  In fact, it either converges to $0$ or is bounded away from $0$. If it tends to 0, taking advantage of \eqref{eq:m_alpha1_log}, \eqref{biasvar} and the property $\int_{0}^{x} e^{y^2} dy \underset{x\rightarrow 0}{\sim} xe^{x^2}$, we find that the variance term is equivalent to:
\begin{align*}
V(m^{\star}) = \frac{e^{4\lambda\Delta}}{\pi \sigma \Delta^{1/2} n} \int_{0}^{\sigma \Delta^{1/2}m^\star} e^{z^2} dz \sim \frac{e^{4\lambda\Delta}}{\pi n} m^{\star} e^{\sigma^2 \Delta {m^{\star}}^2} = Cc m^{\star}  e^{-c\Delta m^{\star}} = c\Delta m^{\star} B^2(m^{\star}).
\end{align*}
Since $R=\sigma^2\Delta^{-1} \log(c_\lambda n) \rightarrow 0$ we get   $\Delta m^{\star} =  \frac{2\log(c_\lambda n)}{c + \sqrt{c^2 + 4R^2}} \rightarrow \infty $. 
Otherwise, $\sigma \Delta^{1/2} m^\star$ can be considered constant. The bias term can be expressed as:
\begin{align*}
B^2(m^\star) = C \Delta^{-1} e^{-c\Delta m^\star} = \frac{e^{4\lambda\Delta}}{\pi n \Delta} e^{\sigma^2 \Delta {m^\star}^2} = \mathcal{O}\left(\frac{1}{n\Delta} \right),
\end{align*}
whereas, using that $\sigma \leq\frac{ \sqrt{\Delta}}{\log n}$, we have

\begin{align*}
V(m^\star) \geq \frac{e^{4\lambda\Delta}}{\pi}\int_0^{m\sqrt{\Delta}\sigma}e^{z^2}dz\frac{\log n}{n\Delta}= \mathcal{O}\left( \frac{\log n}{n\Delta }\right).
\end{align*}
Hence, in both cases, the $L^2$ error is asymptotically variance-dominated, leading to the following bound for the $L^2$ rate:
$$ \E\left[ \nrm{\check{g}_{\Delta,m^{\star}} - g_\Delta}^2\right] \le\frac{e^{4\lambda\Delta}e^{\sigma^2 \Delta {m^{\star}}^2} }{\pi}\frac {\Delta m^{\star}}{ n\Delta}  \le K' \frac{\log n}{ n\Delta},$$
where $K'$ is a positive constant.

\begin{appendix}
\section*{Appendix}
\noindent {\bf Smoothness of the Lévy density.} The result below directly follows from Lemma 2.3 in \cite{picard1997}.
\begin{lemma}\label{lemmapicard}
Let $X$ be a Lévy process satisfying \eqref{Ass:p}. 
Then, \begin{align}\label{eq:Picard1}
|\phi_t(u)|\leq e^{-\frac{2^{\alpha}M}{\pi^{\alpha}}|u|^{\alpha}t},\quad \forall |u|\geq \frac{\pi}{2\eps},\ \forall t>0.
\end{align}
Furthermore, $X_t$ has a smooth density $g_t$ with all the derivates uniformly bounded:
\begin{align}\label{eq:Picard2}
\sup_{x\in\R} |g_t^{(k)}(x)|\leq\frac1{\pi(k+1)}\left(\frac{\pi}{2\eps}\right)^{k+1}+ \frac1\alpha\left(\frac{\pi}{2(tM)^{\frac1\alpha}}\right)^{k+1}\Gamma\left(\frac{k+1}{\alpha},\frac{tM}{\eps^{\alpha}}\right),\quad \forall k\geq 0, \ \forall t>0.
\end{align}
\end{lemma}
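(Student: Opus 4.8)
The plan is to derive \eqref{eq:Picard1} from the Lévy--Khintchine formula together with a quadratic lower bound for $1-\cos$ near the origin, and then to obtain \eqref{eq:Picard2} by Fourier inversion and differentiation under the integral sign; this is essentially a transcription of Lemma~2.3 in \cite{picard1997} to the present notation.

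First I would note that, since $X$ is a pure jump process with triplet $(\gamma_\nu,0,\nu)$, the Lévy--Khintchine formula gives $|\phi_t(u)|=\exp\big(t\int_\R(\cos(ux)-1)\,\nu(dx)\big)$, the drift $itu\gamma_\nu$ and the compensator $-iux\1_{|x|<1}$ contributing only to the phase (and the integral converges absolutely since $\int_{|x|\le1}x^2\nu(dx)<\infty$ and $\nu(\{|x|>1\})<\infty$). On $[-\pi/2,\pi/2]$ one has $1-\cos y\ge \tfrac4{\pi^2}y^2$, because $y\mapsto (1-\cos y)/y^2=\tfrac12\big(\sin(y/2)/(y/2)\big)^2$ is decreasing there. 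Then, for $|u|\ge \pi/(2\eps)$, I would set $\eta:=\pi/(2|u|)\le\eps$ and restrict the nonnegative integrand to $\{|x|\le\eta\}$, on which $|ux|\le\pi/2$, so that
$$\int_\R\big(1-\cos(ux)\big)\nu(dx)\ \ge\ \frac{4u^2}{\pi^2}\int_{|x|\le\eta}x^2\,\nu(dx)\ \ge\ \frac{4M}{\pi^2}\,u^2\eta^{2-\alpha}\ =\ \frac{2^{\alpha}M}{\pi^{\alpha}}\,|u|^{\alpha},$$
using \eqref{Ass:p} and $\eta^{2-\alpha}=(\pi/(2|u|))^{2-\alpha}$; this yields \eqref{eq:Picard1}.

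Next I would use \eqref{eq:Picard1} to observe that $u\mapsto|u|^k|\phi_t(u)|$ is integrable for every $k\ge0$ and every $t>0$; in particular $\phi_t\in L^1(\R)$, so $X_t$ has the bounded continuous density $g_t(x)=\frac1{2\pi}\int_\R\phi_t(u)e^{-iux}\,du$ by Fourier inversion, and dominated convergence (dominating by $|u|^k|\phi_t(u)|\in L^1$) justifies differentiating $k$ times under the integral sign, giving $g_t^{(k)}(x)=\frac1{2\pi}\int_\R(-iu)^k\phi_t(u)e^{-iux}\,du$ and hence $\sup_x|g_t^{(k)}(x)|\le\frac1{2\pi}\int_\R|u|^k|\phi_t(u)|\,du$. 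Finally I would split this integral at $|u|=\pi/(2\eps)$: on $\{|u|\le\pi/(2\eps)\}$ I bound $|\phi_t|\le1$ and integrate the monomial, producing $\frac1{\pi(k+1)}(\pi/(2\eps))^{k+1}$; on $\{|u|>\pi/(2\eps)\}$ I insert \eqref{eq:Picard1} and substitute $s=\frac{2^{\alpha}Mt}{\pi^{\alpha}}u^{\alpha}$ (so the lower limit becomes $s=Mt/\eps^{\alpha}$), turning that piece into an incomplete Gamma term which, after discarding the harmless factor $1/\pi<1$, is bounded by $\frac1\alpha\big(\frac{\pi}{2(tM)^{1/\alpha}}\big)^{k+1}\Gamma\big(\frac{k+1}{\alpha},\frac{tM}{\eps^{\alpha}}\big)$; summing the two pieces gives \eqref{eq:Picard2}. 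No step poses a genuine obstacle since the statement is a direct consequence of \cite{picard1997}; the only points requiring care are justifying the interchange of differentiation and integration and bookkeeping the constants in the trigonometric inequality and in the change of variables.
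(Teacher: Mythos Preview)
Your proposal is correct and follows essentially the same route as the paper: the Lévy--Khintchine formula combined with the inequality $1-\cos y\ge \tfrac{4}{\pi^2}y^2$ on $[-\pi/2,\pi/2]$, restriction to $\{|x|\le \pi/(2|u|)\}$, and application of \eqref{Ass:p} for \eqref{eq:Picard1}; then Fourier inversion, differentiation under the integral, and splitting at $|u|=\pi/(2\eps)$ with the same Gamma-function substitution for \eqref{eq:Picard2}. Your explicit remark about discarding the factor $1/\pi$ in the second term matches what the paper's stated bound implicitly does.
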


\begin{proof}[Proof of Lemma \ref{lemmapicard}]
The Lévy-Khintchine formula allows to write for $u\ne 0$
\begin{align*}
|\phi_{t}(u)|=\exp\left(t\int_{\R}(\cos(u x)-1)\nu(dx)\right)\le\exp\left(t\int_{-\frac{\pi}{2|u|}}^{\frac{\pi}{2|u|}}(\cos(u x)-1)\nu(dx)\right).
\end{align*} Using that for $|x|\le \frac{\pi}{2}$ it holds $-\frac{x^{2}}{2}\le \cos(x)-1\le -\frac{4x^{2}}{\pi^{2}}$ and \eqref{Ass:p}, for all $|u|\ge \frac{\pi}{2\eps}$ we get: 
$$ |\phi_{t}(u)|\le\exp\left(-\frac{4tu^{2}}{\pi^{2}}\int_{-\frac{\pi}{2|u|}}^{\frac{\pi}{2|u|}}x^{2}p(x)dx\right)\le e^{-\frac{2^{\alpha}M}{\pi^{\alpha}}|u|^{\alpha}t}.$$ In particular, this ensures that $u\mapsto u^{k}|\phi_{t}(u)|$ is integrable for any $k\in\N$, that $g_{t}$ is in $C^{k}(\R)$ and for all $x\in\R$ it holds that
\begin{align*}
|g_{t}^{(k)}(x)|&\le \frac1\pi\int_{0}^{\infty}u^{k}|\phi_{t}(u)|du\le\frac1{\pi(k+1)}\left(\frac{\pi}{2\eps}\right)^{k+1}+ \frac1\alpha\left(\frac{\pi}{2(tM)^{\frac1\alpha}}\right)^{k+1}\Gamma\left(\frac{k+1}{\alpha},\frac{tM}{\eps^{\alpha}}\right),
\end{align*} where we split the integral at $\pi/(2\eps)$.

\end{proof}

\noindent {\bf The Talagrand inequality.} The result below follows from the Talagrand concentration inequality given in \cite{klein2005} and arguments in \cite{birge1998} (see the proof of their
Corollary 2 page 354).
\begin{lemma}
\label{lem:T} (Talagrand Inequality) Let $Y_1, \dots, Y_n$ be independent random
variables   and let ${\mathcal F}$ be a countable class of
uniformly bounded measurable functions. Consider $\nu_{n}$, the centered empirical process defined by  $$\nu_{n}(f)=\frac1n\sum_{i=1}^n [f(Y_i)-{\mathbb
E}(f(Y_i))]$$ for $f\in\mathcal{F}$.
Assume there exist three positive constants $M,\ H$ and $v$ such that
$$\sup_{f\in {\mathcal F}}\|f\|_{\infty}\leq M, \;\;\;\;
\mathbb{E}\Big[\sup_{f\in {\mathcal F}}|\nu_{n}(f)|\Big]\leq H,
\quad \sup_{f\in {\mathcal F}}\frac{1}{n}\sum_{k=1}^n{\rm
Var}(f(Y_k)) \leq v^{2}.$$
 Then, for any $\delta>0$ the following holds
\begin{align*}
\mathbb{E}\Big[\sup_{f\in {\mathcal
F}}|\nu_{n}(f)|^2-2(1+2\delta)H^2\Big]_+ &\leq   \frac
4{K_1}\left(\frac {v^{2}}n \exp\left(-K_1\delta \frac{nH^2}{v^{2}}\right) \right.\\&+\left.
\frac{49M^2}{K_1n^2C^2(\delta)} \exp\left(-\frac{K_1
C(\delta)\sqrt{2\delta}}{7}\frac{nH}{M}\right)\right),
\end{align*}
with $C(\delta)=\sqrt{1+\delta}-1$ and $K_1=1/6$.
\end{lemma}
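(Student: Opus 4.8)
The statement is the form of Talagrand's inequality routinely used in adaptive estimation (as in \cite{comte2010nonparametric}), and the plan is to deduce it from the sharp concentration inequality of Klein and Rio \cite{klein2005} followed by the integration-against-the-tail argument of \cite{birge1998} (cf.\ the proof of their Corollary~2). Throughout I would write $Z=\sup_{f\in\mathcal F}|\nu_n(f)|$ and keep the three control quantities $M$, $H$, $v$ from the statement, recalling in particular that $\E[Z]\le H$.

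First I would invoke \cite{klein2005}: applied to the countable uniformly bounded family $\mathcal F$, it yields, for every $x>0$ and every $\eta\in(0,1]$, after absorbing the mixed term through $2\sqrt{MHx}\le\eta H+\eta^{-1}Mx$,
\[
\PP\!\left(Z\ge (1+\eta)\,\E[Z]+\sqrt{\tfrac{2v^2x}{n}}+c(\eta)\tfrac{Mx}{n}\right)\le e^{-x},
\]
with $c(\eta)=\tfrac23+\tfrac2\eta$ explicit. Since $\E[Z]\le H$, the event $\{Z^2\ge 2(1+2\delta)H^2+t\}$ then forces, for $w(t):=\sqrt{2(1+2\delta)H^2+t}-(1+\eta)H$,
\[
\sqrt{\tfrac{2v^2x}{n}}+c(\eta)\tfrac{Mx}{n}\ \ge\ w(t),
\]
so it has probability at most $e^{-x(t)}$ as soon as $x(t)\le\tfrac{n\,w(t)^2}{8v^2}$ and $x(t)\le\tfrac{n\,w(t)}{2c(\eta)M}$. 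The elementary ingredients here are $\sqrt{2(1+2\delta)}-(1+\eta)>0$ for $\eta$ small and $\sqrt{A+t}\ge\tfrac1{\sqrt2}(\sqrt A+\sqrt t)$, which together give $w(t)\ge c_0(H+\sqrt t)$ with $c_0>0$ depending on $\delta$; consequently $x(t)$ may be taken of the form $c_1\min\!\bigl(\tfrac{n(H^2+t)}{v^2},\tfrac{n(H+\sqrt t)}{M}\bigr)$.

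Then I would integrate via the layer-cake identity
\[
\E\!\left[(Z^2-2(1+2\delta)H^2)_+\right]=\int_0^\infty\PP\!\left(Z^2-2(1+2\delta)H^2>t\right)dt\ \le\ \int_0^\infty e^{-x(t)}\,dt,
\]
bounding the minimum in $x(t)$ by either branch and factoring each exponential into its $t$-independent part (an exponential decay factor in $nH^2/v^2$, resp.\ $nH/M$) and its $t$-dependent part. The first branch contributes, through $\int_0^\infty e^{-at}dt=a^{-1}$, a term of order $\tfrac{v^2}{n}\exp(-K_1\delta\,nH^2/v^2)$, and the second, through $\int_0^\infty e^{-a\sqrt t}dt=2a^{-2}$, a term of order $\tfrac{M^2}{n^2C(\delta)^2}\exp(-c\,C(\delta)\sqrt\delta\,nH/M)$; summing gives the claimed bound. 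The main obstacle is the constant bookkeeping: one must tie $\eta$ to $\delta$, track the explicit Klein--Rio constants through the absorption step, and carry out the (elementary but delicate) quadratic manipulations needed to pin the exponents and prefactors down to the precise numerical values $K_1=1/6$, $C(\delta)=\sqrt{1+\delta}-1$ and the factor $7$ appearing in the statement.
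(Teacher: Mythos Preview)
Your proposal is correct and follows exactly the approach the paper indicates: the paper does not give a proof of this lemma but simply states that it ``follows from the Talagrand concentration inequality given in \cite{klein2005} and arguments in \cite{birge1998} (see the proof of their Corollary~2 page~354)'', which is precisely the Klein--Rio bound followed by the Birg\'e--Massart layer-cake integration that you outline. You have in fact supplied more detail than the paper itself, which treats the result as a quotable tool.
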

By standard density arguments, this result can be extended to the case where ${\mathcal F}$ is a unit ball of a linear normed space, after checking that $f\mapsto \nu_n(f)$ is continuous and ${\mathcal F}$ contains a countable dense family.

\end{appendix}





%
%
%
%
%

\end{document}